\documentclass[pdflatex,sn-mathphys-num]{sn-jnl} %,sn-mathphys-num

\usepackage{graphicx}%
\usepackage{multirow}%
\usepackage{amsmath,amssymb,amsfonts}%
\usepackage{amsthm}%
\usepackage{mathrsfs}%
\usepackage[title]{appendix}%
\usepackage{xcolor}%
\usepackage{textcomp}%
\usepackage{manyfoot}%
\usepackage{booktabs}%
\usepackage{algorithm}%
\usepackage{algorithmicx}%
\usepackage{algpseudocode}%
\usepackage{listings}%

\theoremstyle{thmstyleone}%
\newtheorem{theorem}{Theorem}[section]
\newtheorem*{theorem*}{Theorem}
\newtheorem{corollary}[theorem]{Corollary}

\newtheorem{proposition}[theorem]{Proposition}
\newtheorem*{proposition*}{Proposition}

\theoremstyle{thmstyletwo}%

\newtheorem{example}[theorem]{Example}

\newtheorem{remark}[theorem]{Remark}

\theoremstyle{thmstylethree}%
\newtheorem{definition}[theorem]{Definition}

\numberwithin{equation}{section}
\DeclareFontFamily{U}{rsfs}{}
\DeclareFontShape{U}{rsfs}{m}{n}{<-6> rsfs5 <6-8> rsfs7 <8-> rsfs10}{}
\DeclareMathAlphabet{\mathscr}{U}{rsfs}{m}{n}

\newcommand{\rg}{{\rm g}}

\def\cF{\mathcal{F}}

\def\cT{\mathcal{T}}

\DeclareMathOperator\Div{div}

\newcommand{\Aut}{\mathrm{Aut}}
\renewcommand{\det}{\mathop\mathrm{det}\nolimits}

\newcommand{\End}{{\mathrm{End}}}

\renewcommand{\epsilon}{\varepsilon}

\DeclareMathOperator\tr{tr}

\def\g2{\varphi}
\def\s7{\Phi}

\def\fso{\mathfrak{so}}

\newcommand{\SO}{{\rm SO}}

\newcommand{\SU}{{\rm SU}}
\newcommand{\GL}{\mathrm{GL}}

\def\G2{\mathrm{G}_2}
\def\S7{\mathrm{Spin}(7)}
\def\Spin7{\mathrm{Spin(7)}}

\newcommand{\bR}{\mathbb{R}}

\newcommand{\fh}{\mathfrak{h}}
\newcommand{\fm}{\mathfrak{m}}

\providecommand{\Spin}{\mathrm{Spin}}

\begin{document}

\title[Topology of isometric classes and flows of geometric structures]{Topology of isometric classes and flows of geometric structures}

\author[1]{\fnm{Daniel} \sur{Fadel}}\email{daniel.fadel@icmc.usp.br}
\affil[1]{%
\orgdiv{Instituto de Ci\^encias Matem\'aticas e de Computa\c{c}\~ao (ICMC)}, 
\orgname{Universidade de S\~ao Paulo (USP)}, 
\orgaddress{%
\street{Avenida Trabalhador S\~ao-carlense, 400}, 
\city{S\~ao Carlos}, 
\postcode{13566-590}, 
\state{SP}, 
\country{Brazil}}}

\author*[2]{\fnm{Eric} \sur{Loubeau}}\email{loubeau@univ-brest.fr}
\affil*[2]{\orgdiv{D{\'e}partement de Math{\'e}matiques / LMBA UMR 6205}, \orgname{Universit{\'e} de Bretagne Occidentale}, \orgaddress{\street{6, avenue Victor Le Gorgeu}, \city{Brest}, \postcode{29200}, \country{France}}}

\equalcont{A Sorin, per l'amicizia, per le discussioni, per i pranzi.}

\abstract{
We revisit the theory of flows of tensorial $H$-structures for closed and connected Lie subgroups $H\leqslant\SO(n)$, focusing on the topology of isometric classes. We prove that the natural map assigning to an $H$-structure its induced Riemannian metric is surjective and satisfies a parametric homotopy lifting property. Since the space of Riemannian metrics is contractible, the full space of $H$-structures is homotopy equivalent to any fixed isometric class. For parallelizable manifolds, especially flat tori, these classes reduce to mapping spaces into $\SO(n)/H$. We discuss almost Hermitian, $\SU(m)$, $\mathrm G_2$, and $\mathrm{Spin}(7)$ structures on flat tori, showing that their isometric classes and moduli modulo orientation-preserving diffeomorphisms may have infinitely many connected components.

We then relate this topology to the variational theory of the intrinsic torsion energy. On the unrestricted space of $H$-structures, the functional is scale-degenerate in dimensions $n>2$: its infimum is zero on every nonempty path component, and its only critical points are torsion-free structures. Inside fixed isometric classes this homothetic escape direction is absent. We reinterpret finite-time singularity formation as concentration in nontrivial isometric homotopy classes with vanishing energy infimum, and contrast this with cohomological classes, such as $\mathrm U(3)$-structures on the flat $6$-torus, which have positive energy lower bounds and admit smooth harmonic representatives arising from holomorphic maps into $\mathbb{CP}^3$.

Finally, we revisit analytical aspects of \cite{Fadel2022}: we prove a lifting principle for metric-dependent flows of $H$-structures, reinterpret the Ricci $H$-flow, derive a general evolution identity for isometric flows, and explain how the harmonic-flow theory extends beyond the original structural assumptions.
}

\keywords{Geometric structures; $H$-structures; harmonic flow; geometric flows; homogeneous spaces; harmonic sections}

\pacs{53C10, 53C15, 53C43, 58E20, 53E30, 57R20}

\maketitle

\section{Introduction}\label{sec: intro}

Many geometric structures arising in differential geometry may be described as reductions of the oriented frame bundle of a manifold to a closed and connected Lie subgroup $H\leqslant\SO(n)$. Examples include almost Hermitian, $\SU(m)$, $\mathrm{G}_2$, and $\mathrm{Spin}(7)$ structures. In \cite{Fadel2022}, a general framework was developed for the study of flows of tensorial $H$-structures, unifying several phenomena previously known in particular geometries. After establishing a general theory of deformations of $H$-structures and their interaction with the induced Riemannian metric and intrinsic torsion, the theory was specialised to the harmonic flow of $H$-structures, namely the negative gradient flow of the intrinsic torsion energy inside a fixed isometric class.

The present article grew out of subsequent discussions concerning the role played by isometric classes in the theory of geometric flows. Indeed, the harmonic flow evolves inside a fixed metric class, and more generally any isometric flow of $H$-structures is naturally constrained to remain within a single isometric class. This leads naturally to the problem of understanding the topology of the space of compatible $H$-structures inducing a fixed Riemannian metric.

A central observation of this paper is that, once a manifold admits an $H$-structure, compatibility with a Riemannian metric imposes essentially no additional topological restriction. More precisely, if
\begin{equation*}
\Pi:\Gamma(\cF)\longrightarrow \mathrm{Met}(M),
\qquad
\Pi(\xi)=g_\xi,
\end{equation*}
denotes the map assigning to an $H$-structure its induced Riemannian metric, we prove in Theorem~\ref{thm: homotopy_general} that $\Pi$ is surjective and satisfies a parametric homotopy lifting property. Since the space $\mathrm{Met}(M)$ of Riemannian metrics is contractible, Corollary~\ref{cor:metric_map_homotopy_equivalence} shows that the full space of $H$-structures is homotopy equivalent to any fixed isometric class. Consequently, from the homotopical point of view, the topology of the space of geometric structures is entirely encoded by the corresponding isometric classes.

For parallelizable manifolds, these isometric classes reduce to mapping spaces into the homogeneous spaces $\SO(n)/H$. In particular, for flat tori one obtains
\begin{equation*}
\pi_0(\langle g_{\mathrm{flat}}\rangle_H)
\cong
[T^n,\SO(n)/H].
\end{equation*}
This leads to several examples where the topology of the space of geometric structures is already highly nontrivial even inside a fixed metric class. We discuss almost Hermitian, $\SU(m)$, $\G2$, and $\mathrm{Spin}(7)$ structures on flat tori, showing that the corresponding isometric classes possess infinitely many connected components. We further show in Section~\ref{subsec: moduli} that this phenomenon persists after quotienting by orientation-preserving diffeomorphisms, yielding infinitely disconnected moduli spaces of geometric structures.

We then relate this topology to the variational theory of harmonic $H$-structures. First, Theorem~\ref{thm:unrestricted_torsion_energy_degeneracy} shows that the intrinsic torsion energy on the unrestricted space of $H$-structures is scale-degenerate in dimensions $n>2$: its infimum is zero on every nonempty path component, and its only critical points are torsion-free structures. Thus the fixed-isometric-class restriction is not merely natural from the point of view of the harmonic flow; it is also necessary in order to remove the artificial homothetic collapse direction. Within a fixed isometric class, topology becomes genuinely variational. In Section~\ref{subsec: finite-time_sing}, we reinterpret the finite-time singularity examples from \cite{Fadel2022} as concentration phenomena in nontrivial isometric homotopy classes with vanishing energy infimum. By contrast, Proposition~\ref{prop:holomorphic_U3_harmonic_structures} and Corollary~\ref{cor:abelian_threefold_harmonic_U3} show that certain cohomological classes, such as $\mathrm U(3)$-structures on the flat $6$-torus, possess positive energy lower bounds and admit explicit smooth harmonic representatives arising from holomorphic maps into $\mathbb{CP}^3$.

The final part of the paper revisits several analytical aspects of \cite{Fadel2022}. First, we reinterpret the Ricci $H$-flow from \cite[Example~1.26 and Lemma~1.27]{Fadel2022} as a particular instance of a general lifting principle for metric-dependent flows of $H$-structures. More precisely, Proposition~\ref{prop:general_lifting_metric_flows} shows that whenever the evolution equation of an $H$-structure is determined by a symmetric tensor depending only on the induced metric, existence properties of the corresponding metric flow automatically lift to the associated flow of $H$-structures. In this framework, the crucial point is that the lift should be constructed through the evolution operator of a non-autonomous linear ODE on bundle automorphisms, rather than by exponentiating a time-dependent family of endomorphisms.

We then prove in Equation~\eqref{eq:postscriptum-258-general} that the identity imposed in \cite[Equation~(2.58)]{Fadel2022} follows from the general structure equations for arbitrary closed and connected subgroups $H\leqslant\SO(n)$. Finally, we explain how the analytical theory of the harmonic flow extends beyond \cite[Condition~(2.1)]{Fadel2022} by replacing the pointwise identity between $|\nabla\xi|^2$ and $|T|^2$ with a uniform two-sided comparison and systematically working with the intrinsic torsion energy.

\section{Preliminaries}

Throughout this paper, $M^n$ will denote a connected and oriented smooth $n$-manifold without boundary, with $n>2$. We let $\mathrm{Fr}^{+}(M)$ denote the frame bundle of oriented frames on $M$, that is, the principal $\mathrm{GL}^{+}(n,\mathbb{R})$-bundle whose fibre over $x\in M$ consists of the oriented linear isomorphisms $u:T_xM\to\mathbb{R}^n$, with right action $\mathrm{GL}^{+}(n,\mathbb{R})\times \mathrm{Fr}^{+}(M)\to \mathrm{Fr}^{+}(M)$ given by $(\rg,u)\mapsto \rg.u:=\rg^{-1}\circ u$. We shall use the notations and conventions of \cite{Fadel2022}. Unless otherwise stated, all spaces of smooth sections are endowed with the standard $C^\infty$ topology.

Whenever we speak about an \textbf{$H$-structure on $M^n$}, it will always mean an $H$-structure compatible with the fixed orientation on $M^n$, that is, a principal $H$-subbundle $Q\subset\mathrm{Fr}^{+}(M)$, where $H\leqslant\SO(n)$ is a closed and connected Lie subgroup. A choice of Riemannian metric $g$ on $M$ determines the $\SO(n)$-structure $\mathrm{Fr}(M,g)\subset\mathrm{Fr}^{+}(M)$ consisting of all oriented $g$-orthonormal frames. Conversely, an $H$-structure $Q\subset\mathrm{Fr}^{+}(M)$ induces the $\SO(n)$-structure $P:=\SO(n)\cdot Q$ containing $Q$, and hence a unique Riemannian metric $g$ on $M$ such that $P=\mathrm{Fr}(M,g)$.

In general, many distinct $H$-structures may induce the same Riemannian metric. If $g$ is fixed, then an $H$-structure $Q$ induces $g$ if and only if $Q$ is an $H$-reduction of $\mathrm{Fr}(M,g)$; such a structure will be called an \textbf{$H$-structure compatible with $g$}. The quotient $\mathrm{Fr}(M,g)/H$ is a fibre bundle over $M$ with fibre $\SO(n)/H$, and compatible $H$-structures are naturally identified with sections of this homogeneous bundle. Indeed, given a compatible $H$-structure $Q\subset\mathrm{Fr}(M,g)$, one defines $\sigma_Q(x):=\pi_H(u)$ for any $u\in Q_x$, where $\pi_H:\mathrm{Fr}(M,g)\to\mathrm{Fr}(M,g)/H$ is the quotient map. Conversely, any section $\sigma\in\Gamma(\mathrm{Fr}(M,g)/H)$ determines the compatible $H$-structure
\begin{equation*}
    Q_\sigma:=\pi_H^{-1}(\sigma(M)).
\end{equation*}
More generally, arbitrary $H$-structures on $M$ correspond to sections of the $\mathrm{GL}^{+}(n,\mathbb{R})/H$-bundle $\mathrm{Fr}^{+}(M)/H\to M$.

Throughout the paper, we shall restrict ourselves to closed and connected subgroups $H\leqslant\SO(n)$ of the form
\begin{equation*}
    H=\mathrm{Stab}(\xi_\circ)
    :=
    \{\sigma\in\mathrm{GL}^{+}(n,\mathbb{R}):\sigma\cdot\xi_\circ=\xi_\circ\},
\end{equation*}
where $\xi_\circ$ is an element of a finite-dimensional $\mathrm{GL}^{+}(n,\mathbb{R})$-submodule $V\leqslant\oplus\cT^{p,q}(\mathbb{R}^n)$. We shall write $V=V_1\oplus\cdots\oplus V_k$, with $V_i\leqslant\cT^{p_i,q_i}(\mathbb{R}^n)$. Thus an $H$-structure on $M^n$ may be viewed equivalently as a \textbf{geometric structure} modelled on $\xi_\circ$, namely a tensor field $\xi\in\Gamma(\cF)$, where $\cF\subset\oplus\cT^{p,q}(TM)$ is a rank $r$ vector subbundle with typical fibre $V$, such that for every $x\in M$ there exists $u\in\mathrm{Fr}^{+}(M)$ with $u\cdot\xi_\circ=\xi(x)$.

Henceforth, we shall use the tensor notation $\xi$ for an $H$-structure $Q$, and denote by $Q_\xi\subset\mathrm{Fr}^{+}(M)$ the corresponding $H$-reduction and by $g_\xi$ the unique Riemannian metric induced by $\xi$. Accordingly, we shall call $\Gamma(\cF)$ the \textbf{space of $H$-structures} on $M$, which may also be identified with $\Gamma(\mathrm{Fr}^{+}(M)/H)$. We also let
\begin{equation*}
\Pi:\Gamma(\mathcal F)\longrightarrow \mathrm{Met}(M),
\qquad
\Pi(\xi):=g_\xi,
\end{equation*}
denote the map assigning to each $H$-structure its induced Riemannian metric.

\begin{definition}
    Two $H$-structures $\xi_1$ and $\xi_2$ on $M^n$ are said to be \textbf{isometric} if they induce the same Riemannian metric, that is, if $g_{\xi_1}=g_{\xi_2}$. Given an $H$-structure $\xi$ on $M^n$, the space of all $H$-structures on $M^n$ inducing the same metric as $\xi$ will be called the \textbf{isometric class} of $\xi$ and denoted by $[[\xi]]$. If $g=g_\xi$ is the induced metric, we shall also denote this same space by $\langle g\rangle_H$.
\end{definition}

By the previous discussion, if $\xi$ induces the metric $g$, then elements of $[[\xi]]=\langle g\rangle_H$ are naturally identified with sections of the homogeneous bundle $\mathrm{Fr}(M,g)/H\to M$. Thus, from the point of view of geometric flows, the spaces $\langle g\rangle_H$ should be regarded as the natural configuration spaces for isometric deformations of geometric structures.

\section{The topology of isometric classes of $H$-structures}
\label{sec: topology_of_H_structures}

In this section, we study the topology of the space of $H$-structures through the topology of its isometric classes. The guiding point is that many geometric flows of $H$-structures, such as the harmonic flow considered in \cite{Fadel2022}, evolve inside a fixed metric class. Thus the spaces $\langle g\rangle_H$ should be regarded as the natural configuration spaces for isometric deformations of geometric structures.

The main result of this section is that this restriction to a fixed metric class loses no homotopical information: once $M$ admits an $H$-structure, the inclusion of any isometric class into the full space of $H$-structures is a homotopy equivalence. We then make this statement concrete for parallelizable manifolds and, in particular, for flat tori, where the problem reduces to the topology of mapping spaces into the homogeneous spaces $\SO(n)/H$.

\subsection{The metric map and homotopy lifting}

\begin{definition}\label{def: homotopy}
	Let $\xi$ be an $H$-structure on $M$ inducing a Riemannian metric $g$. The \textbf{homotopy class} of $\xi$ as an $H$-structure on $M$ is the path connected component of $\xi$ in $\Gamma(\mathcal{F})$. The \textbf{isometric homotopy class} $[\xi]$ of $\xi$ on $M$ is the homotopy class of $\xi$ within its isometric class $[[\xi]]=\langle g\rangle_H$, i.e. $[\xi]$ is the connected component of $\xi$ as a section of the $SO(n)/H$-bundle $\mathrm{Fr}(M,g)/H\to M$.
\end{definition}
The fundamental result of this section is the following:
\begin{theorem}[The metric map is surjective and has path lifting]
\label{thm: homotopy_general}
Let $M^n$ be a manifold admitting an $H$-structure. Then the metric map
\begin{equation*}
\Pi:\Gamma(\mathcal F)\longrightarrow \mathrm{Met}(M),
\qquad
\Pi(\xi):=g_\xi,
\end{equation*}
is surjective and satisfies the path lifting property. More precisely, let $\xi_0\in\Gamma(\mathcal F)$ be an $H$-structure and let
\begin{equation*}
g(t)\in\mathrm{Met}(M),\qquad t\in[0,1],
\end{equation*}
be a smooth path of Riemannian metrics such that
\begin{equation*}
g(0)=g_{\xi_0}.
\end{equation*}
Then there exists a smooth path of $H$-structures
\begin{equation*}
\xi(t)\in\Gamma(\mathcal F),\qquad \xi(0)=\xi_0,
\end{equation*}
such that
\begin{equation*}
g_{\xi(t)}=g(t)
\end{equation*}
for all $t\in[0,1]$.

In particular, every Riemannian metric on $M$ is induced by some compatible $H$-structure.
\end{theorem}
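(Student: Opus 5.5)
The plan is to lift metrics by acting on $H$-structures with the canonical positive square root relating two metrics. Given metrics $g_0$ and $g$, define the bundle endomorphism $A\in\Gamma(\End(TM))$ by $g(X,Y)=g_0(AX,Y)$. Then $A$ is $g_0$-self-adjoint and positive definite, so it has a unique positive $g_0$-self-adjoint square root $A^{1/2}$. This root depends smoothly, even real-analytically, on $A$; one can see this via the holomorphic functional calculus, or via the implicit function theorem applied to $B\mapsto B^2$ on positive self-adjoint endomorphisms. Set
\begin{equation*}
h:=A^{-1/2}\in\Gamma(\GL^{+}(TM)).
\end{equation*}
It lies in $\GL^{+}$ because $A^{-1/2}$ has positive eigenvalues, and it satisfies $g(hX,hY)=g_0(X,Y)$. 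Hence $h$ maps $g_0$-orthonormal oriented frames to $g$-orthonormal oriented frames: if $u:T_xM\to\mathbb R^n$ is a $g_0$-orthonormal frame, then $u\circ h^{-1}$ is $g$-orthonormal.

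Given $\xi_0$ with $g_{\xi_0}=g_0$, we define $\xi:=h_*\xi_0$, the natural tensorial action of $h$ on $\cT^{p,q}(TM)$. The subbundle $\cF$ is $\GL^{+}$-invariant, since its typical fibre $V$ is a $\GL^{+}(n,\mathbb R)$-submodule and $\cF$ is the associated bundle to $\mathrm{Fr}^{+}(M)$. Therefore $\xi\in\Gamma(\cF)$. Moreover, at each $x$ we have $\xi(x)=(u\circ h^{-1})\cdot\xi_\circ$ whenever $\xi_0(x)=u\cdot\xi_\circ$. Thus $Q_\xi=\{u\circ h^{-1}:u\in Q_{\xi_0}\}$, which lies in $\mathrm{Fr}(M,g)$, so $g_\xi=g$.

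\textbf{Surjectivity.} Since $M$ admits some $\xi_0$, this construction applied to any target metric $g$ gives surjectivity of $\Pi$.

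\textbf{Path lifting.} For a smooth path $g(t)$ with $g(0)=g_{\xi_0}$, set $A(t):=g_0^{-1}g(t)$, $h(t):=A(t)^{-1/2}$ and $\xi(t):=h(t)_*\xi_0$. Then $A(0)=\mathrm{id}$, so $h(0)=\mathrm{id}$ and $\xi(0)=\xi_0$. We also have $g_{\xi(t)}=g(t)$ for each $t$, by the pointwise argument above.

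The main point requiring care is smoothness of $t\mapsto\xi(t)$ jointly in $(t,x)$, and hence as a path in $\Gamma(\cF)$ with the $C^\infty$ topology. This reduces to smoothness of the map $A\mapsto A^{-1/2}$ on the open cone of positive self-adjoint operators. I would establish it by writing
\begin{equation*}
A^{-1/2}=\frac{1}{2\pi i}\oint_\gamma z^{-1/2}(z-A)^{-1}\,dz,
\end{equation*}
with $\gamma$ a contour in the right half-plane enclosing the spectrum. The contour can be chosen locally uniformly in $(t,x)$, and the integrand is smooth in all variables, so $h(t,x)$ is smooth jointly. The action of $h$ on tensors is polynomial in $h$ and $h^{-1}$, so $\xi(t,x)$ is smooth as well.

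Two details remain to be checked. First, self-adjointness of $A$ with respect to $g_0$ ensures that the positive square root is canonical, which is what avoids any choice of frames. Second, $h$ preserves orientation, so the lifted structures stay inside $\mathrm{Fr}^{+}(M)$ as required.
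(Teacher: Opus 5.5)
Your argument is correct, but it takes a different route from the paper.

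\textbf{The paper's route.} The paper lifts the path dynamically. With $A(t)=g(t)^{-1}\tfrac12\partial_t g(t)$, it solves the non-autonomous linear ODE $\partial_t F=A\circ F$, $F(0)=\mathrm{Id}$, and uses Liouville's formula to get $\det F>0$. It then sets $\xi(t)=F(t)\cdot\xi_0$ and deduces $g_{\xi(t)}=g(t)$ from the metric variation formula \cite[Lemma~1.24]{Fadel2022}, together with uniqueness for the induced linear evolution of the metric. Surjectivity comes from lifting the straight segment $(1-t)g_{\xi_0}+tg$.

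\textbf{Your route.} You lift statically, through the square-root factor $h=(g_0^{-1}g)^{-1/2}$, and check $g_\xi=g$ directly on frames via $Q_\xi=Q_{\xi_0}\circ h^{-1}$. So you need neither Lemma~1.24 nor an ODE uniqueness argument, and surjectivity needs no path. The only analytic input is smoothness of $A\mapsto A^{-1/2}$ on the positive cone.

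\textbf{What your route buys.} Your lift depends only on the endpoint metric, not on the path. The assignment $(\xi,g')\mapsto\bigl((g_\xi^{-1}g')^{-1/2}\bigr)_*\xi$ is a continuous (indeed smooth) map $\Gamma(\cF)\times\mathrm{Met}(M)\to\Gamma(\cF)$. This gives Proposition~\ref{prop:parametric_path_lifting} immediately. It also shows that $\xi\mapsto\bigl(((g_\xi^{-1}g)^{-1/2})_*\xi,\,g_\xi\bigr)$ is a homeomorphism $\Gamma(\cF)\cong\langle g\rangle_H\times\mathrm{Met}(M)$, with $\Pi$ the second projection. That strengthens Corollary~\ref{cor:metric_map_homotopy_equivalence} from a homotopy equivalence to a product decomposition.

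\textbf{What the paper's route buys.} The ODE lift is horizontal: it satisfies $\partial_t\xi=A(t)\diamond\xi$ with $A(t)$ self-adjoint for $g(t)$. This is exactly the form used in Proposition~\ref{prop:general_lifting_metric_flows} and in the Ricci $H$-flow corollary. Your lift is generated by $\dot h\,h^{-1}$, whose $g(t)$-skew part vanishes only when $[A(t),\dot A(t)]=0$. In general it differs from the paper's lift by a time-dependent $g(t)$-isometric gauge, so on its own it would not solve $\partial_t\xi=S(t)\diamond\xi$.
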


\begin{proof}
Let
\begin{equation*}
h(t):=\frac12\frac{\partial g(t)}{\partial t}
\in \Gamma(S^2T^*M).
\end{equation*}
Thus $g(t)$ is the unique solution of
\begin{equation*}
\begin{cases}
\displaystyle
\frac{\partial}{\partial t}g(t)=2h(t),
\\
g(0)=g_{\xi_0}.
\end{cases}
\end{equation*}

For each $t\in[0,1]$, define
\begin{equation*}
A(t)\in\Gamma(\End(TM))
\end{equation*}
by
\begin{equation*}
A(t)^i{}_j:=g(t)^{il}h(t)_{lj}.
\end{equation*}
Equivalently,
\begin{equation*}
g(t)(A(t)X,Y)=h(t)(X,Y)
\end{equation*}
for all vector fields $X,Y$.

Now consider the time-dependent linear ODE
\begin{equation*}
\begin{cases}
\displaystyle
\frac{\partial}{\partial t}F(t)=A(t)\circ F(t),
\\
F(0)=\mathrm{Id},
\end{cases}
\end{equation*}
for a family
\begin{equation*}
F(t)\in\Gamma(\End(TM)).
\end{equation*}
For each $p\in M$, this is a linear ODE on $\End(T_pM)$. Since the coefficients depend smoothly on $(t,p)$, its solutions assemble into a smooth family
\begin{equation*}
F(t)\in\Gamma(\End(TM)).
\end{equation*}

Moreover, by Liouville's formula,
\begin{equation*}
\frac{d}{dt}\det F_p(t)
=
\tr(A_p(t))\det F_p(t),
\end{equation*}
and hence
\begin{equation*}
\det F_p(t)
=
\exp\left(
\int_0^t \tr(A_p(s))\,ds
\right)>0.
\end{equation*}
Therefore
\begin{equation*}
F(t)\in\Gamma(\Aut^+(TM))
\end{equation*}
for all $t\in[0,1]$.

Define
\begin{equation*}
\xi(t):=F(t)\cdot \xi_0.
\end{equation*}
Since the space of $H$-structures is invariant under the natural $\GL^+(n,\mathbb R)$-action, we have
\begin{equation*}
\xi(t)\in\Gamma(\mathcal F)
\end{equation*}
for every $t$.

Differentiating gives
\begin{equation*}
\frac{\partial}{\partial t}\xi(t)
=
A(t)\diamond \xi(t).
\end{equation*}
By the metric variation formula \cite[Lemma 1.24]{Fadel2022}, the induced metrics satisfy
\begin{equation*}
\frac{\partial}{\partial t}g_{\xi(t)}
=
2h(t).
\end{equation*}
Since
\begin{equation*}
g_{\xi(0)}=g_{\xi_0}=g(0),
\end{equation*}
uniqueness of solutions to the metric evolution equation gives
\begin{equation*}
g_{\xi(t)}=g(t)
\end{equation*}
for all $t\in[0,1]$.

This proves the path lifting property.

Finally, to prove surjectivity, fix an arbitrary Riemannian metric $g\in\mathrm{Met}(M)$ and connect $g_{\xi_0}$ to $g$ by the straight-line path
\begin{equation*}
g(t):=(1-t)g_{\xi_0}+tg.
\end{equation*}
Since $\mathrm{Met}(M)$ is convex, this is a path of Riemannian metrics. Applying the path lifting result gives an $H$-structure $\xi(1)$ satisfying
\begin{equation*}
g_{\xi(1)}=g.
\end{equation*}
Thus $\Pi$ is surjective.
\end{proof}

The path lifting argument also admits a parametrised version. Since the lift is constructed by solving a linear ODE on each finite-dimensional vector space $\mathrm{End}(T_xM)$, no compactness assumption on the parameter space is needed. We shall use the following form, which applies in particular to Fréchet parameter spaces such as spaces of smooth sections.

\begin{proposition}[Parametric path lifting]
\label{prop:parametric_path_lifting}
Let $B$ be a Fréchet manifold. Let
\begin{equation*}
g:B\times[0,1]\longrightarrow \mathrm{Met}(M),
\qquad
(b,t)\longmapsto g_{b,t},
\end{equation*}
be a smooth family of Riemannian metrics, and let
\begin{equation*}
\xi_0:B\longrightarrow \Gamma(\mathcal F),
\qquad
b\longmapsto \xi_{0,b},
\end{equation*}
be a smooth family of $H$-structures satisfying
\begin{equation*}
g_{b,0}=g_{\xi_{0,b}}
\end{equation*}
for every $b\in B$. Then there exists a smooth family of $H$-structures
\begin{equation*}
\xi:B\times[0,1]\longrightarrow \Gamma(\mathcal F),
\qquad
(b,t)\longmapsto \xi_{b,t},
\end{equation*}
such that
\begin{equation*}
\xi_{b,0}=\xi_{0,b}
\end{equation*}
and
\begin{equation*}
g_{\xi_{b,t}}=g_{b,t}
\end{equation*}
for every $(b,t)\in B\times[0,1]$.
\end{proposition}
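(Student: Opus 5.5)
We run the construction from the proof of Theorem~\ref{thm: homotopy_general} with $b$ as an extra parameter, and check that every step is smooth jointly in $(b,t)$. Set
\begin{equation*}
h_{b,t}:=\tfrac12\partial_t g_{b,t},\qquad A_{b,t}:=g_{b,t}^{-1}h_{b,t}\in\Gamma(\End(TM)).
\end{equation*}
Since $g$ is a smooth map $B\times[0,1]\to\mathrm{Met}(M)$ and inversion of a metric is a smooth pointwise algebraic operation, $A$ is a smooth map $B\times[0,1]\to\Gamma(\End(TM))$. For each $b\in B$ and $p\in M$ we then solve the linear ODE
\begin{equation*}
\partial_tF_{b,t}(p)=A_{b,t}(p)\circ F_{b,t}(p),\qquad F_{b,0}(p)=\mathrm{Id}.
\end{equation*}
Global existence on $[0,1]$ is automatic because the equation is linear. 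As in the theorem, Liouville's formula gives $\det F_{b,t}(p)>0$. We then define
\begin{equation*}
\xi_{b,t}:=F_{b,t}\cdot\xi_{0,b}.
\end{equation*}
The identity $g_{\xi_{b,t}}=g_{b,t}$ follows for each fixed $b$ exactly as before. Indeed, $\partial_t\xi_{b,t}=A_{b,t}\diamond\xi_{b,t}$, so \cite[Lemma 1.24]{Fadel2022} gives $\partial_t g_{\xi_{b,t}}=2h_{b,t}$. Both sides of the desired identity agree at $t=0$ and satisfy the same trivial evolution equation, so they coincide for all $t$. The initial condition $\xi_{b,0}=\xi_{0,b}$ holds since $F_{b,0}=\mathrm{Id}$.

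The main obstacle is the smoothness of $(b,t)\mapsto F_{b,t}$ as a map into the Fréchet space $\Gamma(\End(TM))$ when $B$ is an infinite-dimensional Fréchet manifold. Here the finite-dimensional parameter-dependence theorem does not apply directly to $b$. I plan to avoid ODE theory in Fréchet spaces and instead write $F$ as the time-ordered exponential (Peano--Baker/Dyson series)
\begin{equation*}
F_{b,t}=\sum_{k\ge0}\int_{0\le s_k\le\cdots\le s_1\le t}A_{b,s_1}\circ\cdots\circ A_{b,s_k}\,ds_k\cdots ds_1.
\end{equation*}
Each term is a smooth map $B\times[0,1]\to\Gamma(\End(TM))$, being built from the smooth map $A$ by pointwise composition, which is continuous multilinear, and by integration in $t$.

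To prove convergence with all derivatives, I will work locally on $B$ and on a compact $K\subset M$. In each seminorm $C^r(K)$, and for each $m$-th derivative in $b$ along finitely many directions, the $k$-th term is bounded by $C^k(k+1)^m/k!$ after using the Leibniz rule and the Banach-algebra inequality $\|XY\|_{C^r}\le c_r\|X\|_{C^r}\|Y\|_{C^r}$. This needs local uniform bounds on $A$ and its $b$-derivatives near a point $(b_0,t)$, which follow from continuity of the derivatives of $A$ on a neighbourhood of $\{b_0\}\times[0,1]$ by compactness of $[0,1]$. Uniform convergence of the series together with all its derivatives then yields smoothness in the convenient/Fréchet sense, since smoothness is local and is tested on seminorms.

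Finally, the $\GL^+$-action $(F,\xi)\mapsto F\cdot\xi$ is a fibrewise polynomial map $\Gamma(\Aut^+(TM))\times\Gamma(\mathcal F)\to\Gamma(\mathcal F)$, hence smooth, and $b\mapsto\xi_{0,b}$ is smooth by hypothesis. Composing, $(b,t)\mapsto\xi_{b,t}$ is a smooth map $B\times[0,1]\to\Gamma(\mathcal F)$, which completes the proof.
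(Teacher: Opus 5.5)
Your proof is correct, and the lift itself is the paper's: the same $A_{b,t}=g_{b,t}^{-1}h_{b,t}$, the same linear ODE for $F_{b,t}$, Liouville's formula, $\xi_{b,t}=F_{b,t}\cdot\xi_{0,b}$, and \cite[Lemma~1.24]{Fadel2022}. The genuine difference is how you obtain smoothness in $b$. The paper solves the ODE pointwise on $\End(T_xM)$ and invokes the classical smooth-dependence theorem in the parameters $(b,t,x)$, adding only that the argument is local in $b$. When $B$ is an infinite-dimensional Fr\'echet manifold, this tacitly requires a reduction to finite-dimensional parameters. One way is convenient calculus: test smoothness along smooth curves $c:\mathbb R\to B$, apply the classical theorem with parameters $(s,t,x)$, and use that for Fr\'echet spaces convenient smoothness agrees with Bastiani smoothness. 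Your Peano--Baker series avoids this. The submultiplicative $C^r(K)$ estimates, the Leibniz count $k^m$, and the simplex volume $t^k/k!$ give locally uniform convergence of the series and of all its $(b,t)$-derivatives. Hence $(b,t)\mapsto F_{b,t}\in\Gamma(\End(TM))$ is smooth directly. It costs more bookkeeping, but it is self-contained and makes explicit exactly the point the paper leaves implicit.

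Two minor corrections. First, the action is not fibrewise polynomial in $F$ in general. Already $F\cdot J=F\circ J\circ F^{-1}$ on $(1,1)$-tensors, and one of the two index types always brings in $F^{-1}$. However, $G_{b,t}:=F_{b,t}^{-1}$ solves $\partial_tG=-G\circ A_{b,t}$, $G_{b,0}=\mathrm{Id}$, so the same series argument makes it smooth, and the action is polynomial in $(F,F^{-1})$. Second, the metric variation formula expresses $\partial_t g_{\xi}$ through $A$ and the \emph{current} metric $g_{\xi}$. So you cannot assert $\partial_t g_{\xi_{b,t}}=2h_{b,t}$ before knowing $g_{\xi_{b,t}}=g_{b,t}$, and calling it a ``trivial'' evolution equation is circular. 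The non-circular argument is that both $g_{\xi_{b,t}}$ and $g_{b,t}$ solve the linear equation $\partial_t\gamma(X,Y)=\gamma(A_{b,t}X,Y)+\gamma(X,A_{b,t}Y)$ with the same initial value. They therefore coincide by uniqueness for linear ODEs, which is how the paper's ``uniqueness of solutions to the metric evolution equation'' should be read.
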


\begin{proof}
Define
\begin{equation*}
h_{b,t}:=\frac12\frac{\partial g_{b,t}}{\partial t}
\in \Gamma(S^2T^*M).
\end{equation*}
For each $(b,t)\in B\times[0,1]$, define
\begin{equation*}
A_{b,t}\in\Gamma(\mathrm{End}(TM))
\end{equation*}
by
\begin{equation*}
(A_{b,t})^i{}_j
=
g_{b,t}^{il}(h_{b,t})_{lj}.
\end{equation*}
Equivalently,
\begin{equation*}
g_{b,t}(A_{b,t}X,Y)=h_{b,t}(X,Y)
\end{equation*}
for all vector fields $X,Y$.

Now solve the time-dependent linear ODE
\begin{equation*}
\begin{cases}
\displaystyle
\frac{\partial}{\partial t}F_{b,t}
=
A_{b,t}\circ F_{b,t},
\\[0.3cm]
F_{b,0}=\mathrm{Id},
\end{cases}
\end{equation*}
for a family
\begin{equation*}
F_{b,t}\in\Gamma(\mathrm{End}(TM)).
\end{equation*}
For each fixed $b\in B$ and each point $x\in M$, this is a linear ODE on the finite-dimensional vector space $\mathrm{End}(T_xM)$. Since the coefficients depend smoothly on the parameters $(b,t,x)$, the standard smooth dependence theorem for finite-dimensional linear ODEs with smooth parameters implies that the solutions depend smoothly on $(b,t,x)$. Hence the pointwise solutions assemble into a smooth family
\begin{equation*}
F_{b,t}\in\Gamma(\mathrm{End}(TM)).
\end{equation*}
This argument is local in the parameter $b$, and therefore no compactness assumption on $B$ is required.

By Liouville's formula,
\begin{equation*}
\frac{d}{dt}\det (F_{b,t})_x
=
\mathrm{tr}((A_{b,t})_x)\det (F_{b,t})_x,
\end{equation*}
and hence
\begin{equation*}
\det (F_{b,t})_x
=
\exp\left(
\int_0^t \mathrm{tr}((A_{b,s})_x)\,ds
\right)>0.
\end{equation*}
Therefore
\begin{equation*}
F_{b,t}\in\Gamma(\mathrm{Aut}^+(TM))
\end{equation*}
for all $(b,t)\in B\times[0,1]$.

Define
\begin{equation*}
\xi_{b,t}:=F_{b,t}\cdot \xi_{0,b}.
\end{equation*}
Since the space of $H$-structures is invariant under the natural $\mathrm{GL}^+(n,\mathbb R)$-action, each $\xi_{b,t}$ is an $H$-structure. Moreover, the smoothness of $(b,t)\mapsto \xi_{b,t}$ follows from the smoothness of $(b,t)\mapsto F_{b,t}$ and $b\mapsto \xi_{0,b}$, together with the smoothness of the $\mathrm{GL}^+(n,\mathbb R)$-action on the model tensor bundle.

Differentiating with respect to $t$ gives
\begin{equation*}
\frac{\partial}{\partial t}\xi_{b,t}
=
A_{b,t}\diamond \xi_{b,t}.
\end{equation*}
By the metric variation formula \cite[Lemma~1.24]{Fadel2022},
\begin{equation*}
\frac{\partial}{\partial t}g_{\xi_{b,t}}
=
2h_{b,t}
=
\frac{\partial}{\partial t}g_{b,t}.
\end{equation*}
Moreover,
\begin{equation*}
g_{\xi_{b,0}}=g_{\xi_{0,b}}=g_{b,0}.
\end{equation*}
Therefore, by uniqueness of solutions to the metric evolution equation,
\begin{equation*}
g_{\xi_{b,t}}=g_{b,t}
\end{equation*}
for every $(b,t)\in B\times[0,1]$.
\end{proof}

\subsection{Homotopy type of isometric classes}

The path lifting result has a stronger homotopical consequence: every fixed isometric class has the same homotopy type as the full space of $H$-structures.

\begin{corollary}
\label{cor:metric_map_homotopy_equivalence}
Assume that $M$ admits an $H$-structure. Then, for every Riemannian metric $g$ on $M$, the inclusion
\begin{equation*}
\langle g\rangle_H:=\Pi^{-1}(g)\hookrightarrow \Gamma(\mathcal F)
\end{equation*}
is a homotopy equivalence.

Consequently,
\begin{equation*}
\Gamma(\mathcal F)\simeq \langle g\rangle_H.
\end{equation*}
In particular, the homotopy type of the whole space of $H$-structures is already captured by any fixed isometric class, and
\begin{equation*}
\pi_k(\Gamma(\mathcal F))
\cong
\pi_k(\langle g\rangle_H)
\end{equation*}
for every $k\geq 0$.
\end{corollary}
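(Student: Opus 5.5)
The plan is to construct an explicit deformation retraction of $\Gamma(\mathcal F)$ onto $\langle g\rangle_H$, using the parametric path lifting of Proposition~\ref{prop:parametric_path_lifting} with parameter space $B=\Gamma(\mathcal F)$. Since $\Gamma(\mathcal F)$ is an open subset of the Fréchet space of sections of the model tensor bundle (or at least a Fréchet manifold, being the space of sections of a fibre bundle with fibre $\GL^+(n,\mathbb R)/H$), it is an admissible parameter space. The straight-line homotopy
\begin{equation*}
g_{\eta,t}:=(1-t)\,g_\eta+t\,g,\qquad (\eta,t)\in\Gamma(\mathcal F)\times[0,1],
\end{equation*}
is a smooth family of metrics with $g_{\eta,0}=g_\eta$, because $\Pi$ is smooth: $g_\eta$ depends pointwise algebraically on $\eta(x)$ via the smooth map $\GL^+(n,\mathbb R)/H\to\GL^+(n,\mathbb R)/\SO(n)$. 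Taking $\xi_{0,\eta}:=\eta$, Proposition~\ref{prop:parametric_path_lifting} yields a smooth, hence continuous, map
\begin{equation*}
\Xi:\Gamma(\mathcal F)\times[0,1]\to\Gamma(\mathcal F),\qquad \Xi(\eta,0)=\eta,\qquad g_{\Xi(\eta,t)}=(1-t)g_\eta+tg .
\end{equation*}

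Next, set $r:=\Xi(\cdot,1):\Gamma(\mathcal F)\to\langle g\rangle_H$ and let $\iota$ be the inclusion. The map $\Xi$ is a homotopy from $\mathrm{id}_{\Gamma(\mathcal F)}$ to $\iota\circ r$. The composition in the other order needs some care. If $\eta\in\langle g\rangle_H$, then $g_{\eta,t}\equiv g$, so $h_{\eta,t}=0$ and $A_{\eta,t}=0$. The ODE then gives $F_{\eta,t}=\mathrm{Id}$, so $\Xi(\eta,t)=\eta$ for all $t$. Thus $r\circ\iota=\mathrm{id}_{\langle g\rangle_H}$ exactly, and $\Xi$ is a strong deformation retraction of $\Gamma(\mathcal F)$ onto $\langle g\rangle_H$, which stays inside $\langle g\rangle_H$ along it. This uses the explicit construction in the proof of Proposition~\ref{prop:parametric_path_lifting} rather than only its statement, so I will point to the formula $\xi_{b,t}=F_{b,t}\cdot\xi_{0,b}$ there. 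It follows that $\iota$ is a homotopy equivalence, and the isomorphisms $\pi_k(\Gamma(\mathcal F))\cong\pi_k(\langle g\rangle_H)$ hold for all basepoints in $\langle g\rangle_H$.

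The hypothesis that $M$ admits an $H$-structure guarantees that $\Gamma(\mathcal F)\neq\emptyset$, and Theorem~\ref{thm: homotopy_general} then guarantees that $\langle g\rangle_H\neq\emptyset$. Every path component of $\Gamma(\mathcal F)$ meets $\langle g\rangle_H$, since $r$ is connected to the identity, so the statement for $\pi_0$ and for all basepoints follows.

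The main obstacle is the regularity and topology issue. Continuity of $\Xi$ in the $C^\infty$ topology requires that $F_{\eta,t}$ depends continuously on $\eta$ in $C^\infty$, including all $x$-derivatives. I will handle this by observing that $A_{\eta,t}(x)$ is a smooth function of the finite jet $(\eta(x),g(x),t)$, and in fact of the $0$-jet. Hence $F_{\eta,t}(x)=\Phi(t,\eta(x),g(x))$ for a fixed smooth fibrewise map $\Phi$ given by the flow of a finite-dimensional ODE. Composition with a fixed smooth fibre-bundle map is continuous on $C^\infty$ section spaces, and this gives continuity jointly in $(\eta,t)$. This observation bypasses any Fréchet-calculus subtleties that might otherwise be hidden in Proposition~\ref{prop:parametric_path_lifting}.
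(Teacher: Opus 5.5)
Your proposal is correct and follows the paper's proof. You lift the straight-line contraction of $\mathrm{Met}(M)$ onto $g$ via Proposition~\ref{prop:parametric_path_lifting} with $B=\Gamma(\mathcal F)$, take $r=\Xi(\cdot,1)$, and use $\Xi$ and its restriction to $\langle g\rangle_H\times[0,1]$ as the two homotopies. Your additional observations are valid refinements the paper does not spell out: on $\langle g\rangle_H$ one has $A\equiv0$ and hence $F\equiv\mathrm{Id}$, which gives an honest strong deformation retraction instead of only $r\circ\iota\simeq\mathrm{Id}$. Likewise, $F_{\eta,t}(x)$ is a fixed fibrewise function of $(t,g_\eta(x),g(x))$, which gives continuity of $\Xi$ without relying on a Fréchet-manifold structure on $\Gamma(\mathcal F)$.
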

\begin{proof}
The space $\mathrm{Met}(M)$ is contractible. Indeed,
\begin{equation*}
\mathrm{Met}(M)\subset \Gamma(S^2T^*M)
\end{equation*}
is a convex subset of the Fréchet vector space of smooth symmetric $(0,2)$-tensors: if $g_0,g_1\in\mathrm{Met}(M)$, then
\begin{equation*}
g_t=(1-t)g_0+tg_1
\end{equation*}
is positive definite for every $t\in[0,1]$. 

Fix $g\in\mathrm{Met}(M)$ and let
\begin{equation*}
C:\mathrm{Met}(M)\times[0,1]\longrightarrow \mathrm{Met}(M)
\end{equation*}
be the straight-line contraction to $g$,
\begin{equation*}
C(g_0,t):=(1-t)g_0+tg.
\end{equation*}

By Proposition \ref{prop:parametric_path_lifting}, this contraction lifts to a homotopy
\begin{equation*}
\widetilde C:\Gamma(\mathcal F)\times[0,1]\longrightarrow \Gamma(\mathcal F)
\end{equation*}
satisfying
\begin{equation*}
\widetilde C(\xi,0)=\xi
\end{equation*}
and
\begin{equation*}
g_{\widetilde C(\xi,t)}
=
C(g_\xi,t).
\end{equation*}
In particular,
\begin{equation*}
g_{\widetilde C(\xi,1)}=g,
\end{equation*}
so
\begin{equation*}
\widetilde C(\xi,1)\in\langle g\rangle_H.
\end{equation*}

Let
\begin{equation*}
r:\Gamma(\mathcal F)\longrightarrow \langle g\rangle_H
\end{equation*}
be defined by
\begin{equation*}
r(\xi):=\widetilde C(\xi,1).
\end{equation*}
Then $r$ is a homotopy inverse to the inclusion
\begin{equation*}
i:\langle g\rangle_H\hookrightarrow \Gamma(\mathcal F).
\end{equation*}
Indeed,
\begin{equation*}
i\circ r\simeq \mathrm{Id}_{\Gamma(\mathcal F)}
\end{equation*}
by the homotopy $\widetilde C$.

On the other hand, if $\xi\in\langle g\rangle_H$, then
\begin{equation*}
C(g_\xi,t)=C(g,t)=g
\end{equation*}
for all $t$. Hence the lifted homotopy remains inside $\langle g\rangle_H$, and therefore
\begin{equation*}
r\circ i\simeq \mathrm{Id}_{\langle g\rangle_H}.
\end{equation*}
Thus $i$ is a homotopy equivalence.
\end{proof}

\subsection{Parallelizable manifolds and mapping spaces}

Theorem \ref{thm: homotopy_general} and Corollary \ref{cor:metric_map_homotopy_equivalence} show that, from the homotopical point of view, the study of the full space $\Gamma(\mathcal F)$ of $H$-structures reduces to the study of any fixed isometric class $\langle g\rangle_H$. By the discussion in the preliminaries, the latter is naturally identified with the section space
$\Gamma(\mathrm{Fr}(M,g)/H)$, where the fiber of the underlying bundle is the homogeneous space $\mathrm{SO}(n)/H$. Thus, the topology of the space of $H$-structures is ultimately governed by the topology of the corresponding homogeneous section spaces.

In certain geometrically distinguished situations, these section spaces admit a particularly explicit description. For instance, if $(M,g)$ is a flat torus, then the Levi--Civita connection admits a global parallel oriented orthonormal frame, and hence the orthonormal frame bundle is canonically trivial:
\begin{equation*}
\mathrm{Fr}(M,g)\cong M\times \mathrm{SO}(n).
\end{equation*}
Consequently,
\begin{equation*}
\mathrm{Fr}(M,g)/H
\cong
M\times \mathrm{SO}(n)/H,
\end{equation*}
and therefore
\begin{equation*}
\langle g\rangle_H
\cong
C^\infty(M,\mathrm{SO}(n)/H).
\end{equation*}
In particular,
\begin{equation*}
\pi_0(\langle g\rangle_H)
\cong
[M,\mathrm{SO}(n)/H].
\end{equation*}

More generally, the same conclusion holds whenever $M$ is parallelizable. Indeed, given any Riemannian metric $g$ on a parallelizable manifold $M$, a global frame may be orthonormalized via the Gram--Schmidt process, yielding a trivialization
\begin{equation*}
\mathrm{Fr}(M,g)\cong M\times \mathrm{SO}(n).
\end{equation*}
Hence, for parallelizable manifolds, the topology of the space of $H$-structures is reduced to the topology of mapping spaces into the homogeneous space $\mathrm{SO}(n)/H$. Important examples of parallelizable manifolds include Lie groups, products of parallelizable manifolds, all orientable $3$-manifolds, the spheres $S^1$, $S^3$, and $S^7$, and compact quotients of Lie groups by lattices, such as nilmanifolds and solvmanifolds. This identification depends on the choice of a global orthonormal frame. For a flat torus, the flat metric admits a global parallel orthonormal frame, so the trivialization is geometrically preferred. On a general parallelizable manifold, the trivialization is noncanonical, but it is sufficient for identifying the homotopy type of the fixed isometric class.
%Although this viewpoint is implicit in several particular geometries --- such as almost Hermitian, $\mathrm G_2$, and $\mathrm{Spin}(7)$ structures --- the argument above applies uniformly to arbitrary tensorial $H$-structures.

\subsection{Examples on flat tori}

We now illustrate these observations in several classical geometries on flat tori. Since flat tori are parallelizable and their flat metrics admit canonical global parallel orthonormal frames, the corresponding isometric classes reduce to mapping spaces into the homogeneous spaces $\mathrm{SO}(n)/H$. Consequently, their connected components are described by free homotopy classes
\begin{equation*}
[T^n,\mathrm{SO}(n)/H].
\end{equation*}

In many cases, these spaces have infinitely many connected components. The underlying reason is that the torus has large integral cohomology,
\begin{equation*}
H^k(T^n;\mathbb Z)\cong \mathbb Z^{\binom{n}{k}},
\end{equation*}
while the homogeneous spaces $\mathrm{SO}(n)/H$ typically possess nontrivial homotopy groups carrying integral invariants. Through obstruction theory, these invariants give rise to infinitely many distinct homotopy classes of maps
\begin{equation*}
T^n\longrightarrow \mathrm{SO}(n)/H.
\end{equation*}
In particular, even within a fixed metric class, the topology of the space of $H$-structures may already be extremely rich.

The first example is written in some detail in order to spell out the algebraic-topological mechanism that will be used more briefly in the subsequent examples. In particular, we explain explicitly the role of the induced map on fundamental groups, the corresponding cohomology class in degree one, and the lifting criterion for the universal cover.

\begin{example}[$\mathrm G_2$-structures on $T^7$]
For $H=\mathrm G_2\subset \mathrm{SO}(7)$, one has
\begin{equation*}
\mathrm{SO}(7)/\mathrm G_2\cong \mathbb{RP}^7.
\end{equation*}
Therefore
\begin{equation*}
\pi_0(\langle g_{\mathrm{flat}}\rangle_{\mathrm G_2})
\cong
[T^7,\mathbb{RP}^7],
\end{equation*}
where the right-hand side denotes free homotopy classes of maps.

Since
\begin{equation*}
\pi_1(T^7)\cong\mathbb Z^7
\qquad\text{and}\qquad
\pi_1(\mathbb{RP}^7)\cong\mathbb Z_2,
\end{equation*}
every map
\begin{equation*}
f:T^7\longrightarrow \mathbb{RP}^7
\end{equation*}
induces a homomorphism
\begin{equation*}
f_*:\mathbb Z^7\longrightarrow \mathbb Z_2.
\end{equation*}
Because $\mathbb Z_2$ is abelian, this induced homomorphism depends only on the free homotopy class of $f$. Hence the assignment
\begin{equation*}
[f]\longmapsto f_*
\end{equation*}
detects at least
\begin{equation*}
\mathrm{Hom}(\mathbb Z^7,\mathbb Z_2)
\cong
(\mathbb Z_2)^7
\end{equation*}
distinct connected components. Equivalently, these connected components are detected by the cohomology class
\begin{equation*}
f^*w\in H^1(T^7;\mathbb Z_2),
\end{equation*}
where $w$ denotes the generator of
\begin{equation*}
H^1(\mathbb{RP}^7;\mathbb Z_2).
\end{equation*}
Indeed, the induced homomorphism
\begin{equation*}
f_*:\pi_1(T^7)\to\pi_1(\mathbb{RP}^7)
\end{equation*}
corresponds, under the identification
\begin{equation*}
H^1(T^7;\mathbb Z_2)
\cong
\mathrm{Hom}(\pi_1(T^7),\mathbb Z_2),
\end{equation*}
precisely to the class $f^*w$.

Thus the space
\begin{equation*}
[T^7,\mathbb{RP}^7]
\end{equation*}
decomposes into disjoint subsets indexed by the possible induced homomorphisms
\begin{equation*}
f_*:\mathbb Z^7\to\mathbb Z_2,
\end{equation*}
or equivalently by the cohomology classes
\begin{equation*}
f^*w\in H^1(T^7;\mathbb Z_2).
\end{equation*}
We shall refer to each such subset as a \emph{sector}.

In particular, the sector corresponding to the trivial homomorphism
\begin{equation*}
f_*:\mathbb Z^7\to\mathbb Z_2
\end{equation*}
consists precisely of maps whose induced map on fundamental groups is trivial. By the lifting criterion for covering spaces, these are exactly the maps that lift to the universal cover
\begin{equation*}
S^7\longrightarrow \mathbb{RP}^7.
\end{equation*}
Hence this sector contains the free homotopy classes
\begin{equation*}
[T^7,S^7].
\end{equation*}

Now, since
\begin{equation*}
\pi_7(S^7)\cong\mathbb Z,
\end{equation*}
maps
\begin{equation*}
f:T^7\to S^7
\end{equation*}
carry the integer degree invariant
\begin{equation*}
\deg(f)\in H^7(T^7;\mathbb Z)\cong\mathbb Z,
\end{equation*}
obtained by pulling back the fundamental class of $S^7$. Consequently, even the trivial sector already contains infinitely many connected components, and therefore
\begin{equation*}
\#\pi_0(\langle g_{\mathrm{flat}}\rangle_{\mathrm G_2})=\infty.
\end{equation*}
\end{example}

\begin{example}[$\mathrm{Spin}(7)$-structures on $T^8$]
For $H=\mathrm{Spin}(7)\subset \mathrm{SO}(8)$, one has
\begin{equation*}
\mathrm{SO}(8)/\mathrm{Spin}(7)\cong \mathbb{RP}^7.
\end{equation*}
Therefore
\begin{equation*}
\pi_0(\langle g_{\mathrm{flat}}\rangle_{\mathrm{Spin}(7)})
\cong
[T^8,\mathbb{RP}^7],
\end{equation*}
where the right-hand side denotes free homotopy classes of maps.

Since
\begin{equation*}
\pi_1(T^8)\cong\mathbb Z^8
\qquad\text{and}\qquad
\pi_1(\mathbb{RP}^7)\cong\mathbb Z_2,
\end{equation*}
every map
\begin{equation*}
f:T^8\longrightarrow \mathbb{RP}^7
\end{equation*}
induces a homomorphism
\begin{equation*}
f_*:\mathbb Z^8\longrightarrow \mathbb Z_2.
\end{equation*}
Because $\mathbb Z_2$ is abelian, this induced homomorphism depends only on the free homotopy class of $f$. Hence the assignment
\begin{equation*}
[f]\longmapsto f_*
\end{equation*}
detects at least
\begin{equation*}
\mathrm{Hom}(\mathbb Z^8,\mathbb Z_2)
\cong
(\mathbb Z_2)^8
\end{equation*}
distinct connected components. Equivalently, these components are detected by the cohomology class
\begin{equation*}
f^*w\in H^1(T^8;\mathbb Z_2),
\end{equation*}
where $w$ denotes the generator of
\begin{equation*}
H^1(\mathbb{RP}^7;\mathbb Z_2).
\end{equation*}

In fact, the space has infinitely many connected components. Indeed, the sector corresponding to the trivial homomorphism
\begin{equation*}
f_*:\mathbb Z^8\to\mathbb Z_2
\end{equation*}
consists precisely of maps that lift to the universal cover
\begin{equation*}
S^7\longrightarrow \mathbb{RP}^7.
\end{equation*}
Thus this sector contains the free homotopy classes
\begin{equation*}
[T^8,S^7].
\end{equation*}
Since
\begin{equation*}
\pi_7(S^7)\cong\mathbb Z,
\end{equation*}
maps
\begin{equation*}
T^8\longrightarrow S^7
\end{equation*}
carry a primary integral invariant in
\begin{equation*}
H^7(T^8;\mathbb Z)\cong \mathbb Z^8,
\end{equation*}
obtained by pulling back the fundamental class of $S^7$. Consequently,
\begin{equation*}
\#\pi_0(\langle g_{\mathrm{flat}}\rangle_{\mathrm{Spin}(7)})=\infty.
\end{equation*}
\end{example}

\begin{example}[Almost Hermitian structures on $T^4$]
For $H=\mathrm{U}(2)\subset \mathrm{SO}(4)$, one has
\begin{equation*}
\mathrm{SO}(4)/\mathrm{U}(2)\cong S^2.
\end{equation*}
Therefore
\begin{equation*}
\pi_0(\langle g_{\mathrm{flat}}\rangle_{\mathrm{U}(2)})
\cong
[T^4,S^2].
\end{equation*}

Since
\begin{equation*}
\pi_2(S^2)\cong\mathbb Z,
\end{equation*}
every map
\begin{equation*}
f:T^4\longrightarrow S^2
\end{equation*}
determines a primary integral invariant given by the pullback of the fundamental class,
\begin{equation*}
f^*[S^2]\in H^2(T^4;\mathbb Z).
\end{equation*}
Because
\begin{equation*}
H^2(T^4;\mathbb Z)\cong \mathbb Z^6,
\end{equation*}
this invariant can take infinitely many values. Consequently,
\begin{equation*}
\#\pi_0(\langle g_{\mathrm{flat}}\rangle_{\mathrm{U}(2)})=\infty.
\end{equation*}

Equivalently, even within the fixed flat metric class, the space of almost Hermitian structures on $T^4$ already has infinitely many connected components.
\end{example}

\begin{example}[$\mathrm{SU}(2)$-structures on $T^4$]
For $H=\mathrm{SU}(2)\subset \mathrm{SO}(4)$, one has
\begin{equation*}
\mathrm{SO}(4)/\mathrm{SU}(2)\cong \mathbb{RP}^3.
\end{equation*}
Thus
\begin{equation*}
\pi_0(\langle g_{\mathrm{flat}}\rangle_{\mathrm{SU}(2)})
\cong
[T^4,\mathbb{RP}^3].
\end{equation*}

As in the $\mathrm G_2$ and $\mathrm{Spin}(7)$ examples, the fundamental group already detects several components. Since
\begin{equation*}
\pi_1(\mathbb{RP}^3)\cong \mathbb Z_2,
\end{equation*}
there are at least
\begin{equation*}
\mathrm{Hom}(\mathbb Z^4,\mathbb Z_2)
\cong
(\mathbb Z_2)^4
\end{equation*}
distinct sectors, equivalently detected by
\begin{equation*}
H^1(T^4;\mathbb Z_2)\cong(\mathbb Z_2)^4.
\end{equation*}

Moreover, the trivial $\pi_1$-sector consists of maps lifting to the universal cover
\begin{equation*}
S^3\longrightarrow \mathbb{RP}^3.
\end{equation*}
Since
\begin{equation*}
\pi_3(S^3)\cong\mathbb Z,
\end{equation*}
these lifted maps carry a primary integral invariant in
\begin{equation*}
H^3(T^4;\mathbb Z)\cong \mathbb Z^4.
\end{equation*}
Hence
\begin{equation*}
\#\pi_0(\langle g_{\mathrm{flat}}\rangle_{\mathrm{SU}(2)})=\infty.
\end{equation*}
\end{example}

\begin{example}[$\mathrm{SU}(m)$-structures on $T^{2m}$]
For $H=\mathrm{SU}(m)\subset \mathrm{SO}(2m)$, there is a natural fibration
\begin{equation*}
\mathrm{U}(m)/\mathrm{SU}(m)
\longrightarrow
\mathrm{SO}(2m)/\mathrm{SU}(m)
\longrightarrow
\mathrm{SO}(2m)/\mathrm{U}(m),
\end{equation*}
whose fiber is
\begin{equation*}
\mathrm{U}(m)/\mathrm{SU}(m)\cong S^1.
\end{equation*}
Thus, the homogeneous space $\mathrm{SO}(2m)/\mathrm{SU}(m)$ already contains degree-one topology. In particular, maps
\begin{equation*}
f:T^{2m}\longrightarrow \mathrm{SO}(2m)/\mathrm{SU}(m)
\end{equation*}
carry invariants detected by
\begin{equation*}
H^1(T^{2m};\mathbb Z)\cong \mathbb Z^{2m}.
\end{equation*}
Consequently,
\begin{equation*}
\#\pi_0(\langle g_{\mathrm{flat}}\rangle_{\mathrm{SU}(m)})=\infty.
\end{equation*}
\end{example}
These examples reflect the same general mechanism as above. Whenever the homogeneous space
\begin{equation*}
\mathrm{SO}(n)/H
\end{equation*}
has a nontrivial rational homotopy group in some degree $k\leq n$, obstruction theory produces invariants for maps
\begin{equation*}
T^n\longrightarrow \mathrm{SO}(n)/H
\end{equation*}
with values in the cohomology of the torus. Since
\begin{equation*}
H^k(T^n;\mathbb Z)\cong \mathbb Z^{\binom nk},
\end{equation*}
any integral invariant of this type can vary over an infinite set. Thus, in all the examples above, the infinitude of components in the flat isometric class is ultimately caused by the interaction between the rational homotopy of the homogeneous fiber and the large integral cohomology of the torus.

The examples above show that isometric classes can already have infinitely many connected components. The next question is whether this infinitude persists after quotienting by reparametrizations of the underlying manifold.

\subsection{Moduli spaces modulo diffeomorphisms}\label{subsec: moduli}

The previous examples show that the topology of fixed isometric classes may already be highly nontrivial. One may nevertheless ask whether part of this complexity is merely an artifact of parametrisation, disappearing after quotienting by diffeomorphisms. This leads naturally to the study of the moduli space of $H$-structures modulo orientation-preserving diffeomorphisms,
\begin{equation*}
\mathcal M_H(M):=\Gamma(\mathcal F)/\mathrm{Diff}^{+}(M).
\end{equation*}

More generally, for a fixed Riemannian metric $g$, one may consider the quotient of the isometric class
\begin{equation*}
\langle g\rangle_H
\end{equation*}
under the natural action of $\mathrm{Diff}^{+}(M)$ induced by pullback. In the case of flat tori, where isometric classes reduce to mapping spaces,
\begin{equation*}
\pi_0(\langle g_{\mathrm{flat}}\rangle_H)
\cong
[T^n,\mathrm{SO}(n)/H],
\end{equation*}
this action becomes the standard action by precomposition,
\begin{equation*}
[f]\longmapsto [f\circ\varphi],
\qquad
\varphi\in\mathrm{Diff}^{+}(T^n).
\end{equation*}

Now, the mapping class group of the torus satisfies
\begin{equation*}
\pi_0(\mathrm{Diff}^{+}(T^n))
\cong
\mathrm{SL}(n,\mathbb Z),
\end{equation*}
so the quotient problem reduces to understanding the induced arithmetic action of $\mathrm{SL}(n,\mathbb Z)$ on the corresponding homotopy invariants.

In the examples discussed above, the first invariants arose from the induced homomorphisms
\begin{equation*}
f_*:\pi_1(T^n)\to\pi_1(\mathrm{SO}(n)/H),
\end{equation*}
equivalently from cohomology classes in
\begin{equation*}
H^1(T^n;\mathbb Z_2).
\end{equation*}
The action of $\mathrm{SL}(n,\mathbb Z)$ on
\begin{equation*}
H^1(T^n;\mathbb Z_2)\cong (\mathbb Z_2)^n
\end{equation*}
collapses many of these sectors. For example, all nonzero elements belong to a single $\mathrm{SL}(n,\mathbb Z)$-orbit.

However, the higher integral invariants appearing in the previous examples survive modulo orientation-preserving diffeomorphisms. Indeed, if
\begin{equation*}
\varphi:T^n\to T^n
\end{equation*}
is orientation-preserving, then
\begin{equation*}
\deg(\varphi)=1,
\end{equation*}
and therefore the induced action on top cohomology
\begin{equation*}
H^n(T^n;\mathbb Z)\cong \mathbb Z
\end{equation*}
is trivial. Consequently, degree-type invariants arising from lifts to spheres are preserved under the action of $\mathrm{Diff}^{+}(T^n)$.

For instance, in the case of $\mathrm G_2$-structures on $T^7$, the trivial $\pi_1$-sector consists of maps lifting to
\begin{equation*}
S^7\to \mathbb{RP}^7,
\end{equation*}
and therefore contains the degree invariant
\begin{equation*}
\deg(f)\in H^7(T^7;\mathbb Z)\cong\mathbb Z.
\end{equation*}
Since this invariant is preserved by orientation-preserving diffeomorphisms of $T^7$, the quotient moduli space still possesses infinitely many connected components.

The same phenomenon occurs for $\mathrm{Spin}(7)$-structures on $T^8$, where the lifted maps again take values in $S^7$, and also for $\mathrm{SU}(2)$- and $\mathrm{U}(2)$-structures on $T^4$, where the relevant invariants lie respectively in
\begin{equation*}
H^3(T^4;\mathbb Z)
\qquad\text{and}\qquad
H^2(T^4;\mathbb Z).
\end{equation*}

Thus, even after quotienting by orientation-preserving diffeomorphisms, the moduli spaces of several classical geometric structures on flat tori remain infinitely disconnected. The disconnectedness phenomena detected above are therefore not merely artifacts of parametrisation: they reflect genuine topological features of the corresponding moduli spaces of geometric structures. In particular, the topology of fixed isometric classes may remain highly nontrivial even after passing to moduli, and hence forms part of the natural background for studying geometric flows constrained to evolve inside fixed metric classes.

\subsection{Topological interpretation of finite-time singularity examples}\label{subsec: finite-time_sing}

The topological viewpoint developed in Section~\ref{sec: topology_of_H_structures} sheds additional light on the finite-time singularity examples constructed in \cite[Example~2.17 and Remark~2.18]{Fadel2022}. In particular, those examples may be interpreted as showing that certain finite-time singularities of the harmonic flow are topologically forced inside a fixed isometric class.

Recall that, by Theorem \ref{thm: homotopy_general} and Corollary \ref{cor:metric_map_homotopy_equivalence}, once a manifold admits an $H$-structure, the topology of the full space of $H$-structures is entirely encoded by the topology of any fixed isometric class
\begin{equation*}
\langle g\rangle_H.
\end{equation*}
Moreover, for a flat torus $(T^n,g_{\mathrm{flat}})$, one has
\begin{equation*}
\langle g_{\mathrm{flat}}\rangle_H
\cong
C^\infty(T^n,\SO(n)/H),
\end{equation*}
so the connected components of the isometric class are precisely the free homotopy classes
\begin{equation*}
[T^n,\SO(n)/H].
\end{equation*}

The examples in Section~\ref{sec: topology_of_H_structures} show that, for many geometries, these spaces possess infinitely many nontrivial connected components. In particular, there exist nontrivial homotopy classes represented by maps
\begin{equation*}
f:T^n\longrightarrow \SO(n)/H
\end{equation*}
supported inside arbitrarily small balls.

More precisely, if
\begin{equation*}
f:S^n\longrightarrow \SO(n)/H
\end{equation*}
represents a nontrivial element of $\pi_n(\SO(n)/H)$, then composing with the quotient map
\begin{equation*}
T^n\longrightarrow T^n/(T^n\setminus B)\cong S^n,
\end{equation*}
where $B\subset T^n$ is a small embedded ball, yields a nontrivial map
\begin{equation*}
f_B:T^n\longrightarrow \SO(n)/H
\end{equation*}
supported inside $B$.

Because the harmonic flow energy is scale-supercritical in dimensions $n>2$, the corresponding Dirichlet energy can be made arbitrarily small by shrinking the support of the map. Equivalently, if $\xi_B$ denotes the corresponding compatible $H$-structure on $(T^n,g_{\mathrm{flat}})$, then
\begin{equation*}
\mathcal D(\xi_B)\longrightarrow 0
\end{equation*}
as the radius of $B$ tends to zero.

Thus these nontrivial isometric homotopy classes satisfy
\begin{equation*}
\inf_{[\xi]}\mathcal D=0,
\end{equation*}
despite containing no torsion-free structure.

Indeed, for the flat metric on $T^n$, every torsion-free compatible $H$-structure is parallel, and therefore corresponds, under the identification
\begin{equation*}
\langle g_{\mathrm{flat}}\rangle_H
\cong
C^\infty(T^n,\SO(n)/H),
\end{equation*}
to a constant map
\begin{equation*}
T^n\longrightarrow \SO(n)/H.
\end{equation*}
Hence every torsion-free structure belongs to the trivial connected component.

Consequently, the nontrivial components constructed above contain no torsion-free structures, while at the same time having energy infimum equal to zero. This is precisely the situation covered by \cite[Theorem~2.16]{Fadel2022}, which implies the existence of initial conditions with arbitrarily small energy whose harmonic flow develops a finite-time singularity.

From this perspective, the singularity formation phenomenon has a natural topological interpretation. Since the harmonic flow evolves inside a fixed isometric homotopy class, the flow cannot smoothly converge to a torsion-free structure when the initial class is topologically nontrivial. On the other hand, the energy infimum in that class is zero. Thus, the only way for the flow to dissipate its energy is through concentration phenomena leading to singularity formation.

In other words, the finite-time singularities in \cite[Example~2.17]{Fadel2022} may be viewed as topologically forced singularities: the flow attempts to collapse a nontrivial homotopy class toward the trivial one, but this transition cannot occur inside the smooth configuration space without the formation of singularities.

This interpretation is closely analogous to the classical harmonic map heat flow in supercritical dimensions, where nontrivial homotopy classes may have vanishing energy infimum through concentration on smaller and smaller regions, while smooth minimizers fail to exist in the corresponding class.

\section{Variational consequences of the topology of $H$-structures}
\label{sec:variational_consequences_topology}

Theorem~\ref{thm: homotopy_general} and Corollary~\ref{cor:metric_map_homotopy_equivalence} show that, up to homotopy, the topology of the full space of $H$-structures is already encoded by fixed isometric classes. We now explain how this viewpoint interacts with the variational theory of the intrinsic torsion energy. There are two complementary phenomena. On the unrestricted space of $H$-structures, allowing the induced metric to vary introduces an artificial scaling direction. By contrast, inside a fixed isometric class this scaling direction is absent, and the topology of the class leads to genuinely variational questions, including concentration mechanisms, positive lower bounds, and the existence of nontrivial harmonic representatives.

\subsection{The unrestricted torsion energy}
\label{subsec:unrestricted_torsion_energy}

We first make precise the scale-degeneracy of the unrestricted functional. Starting from any $H$-structure $\xi_0$, one may act by the homothetic bundle automorphisms $e^{-t}\mathrm{Id}_{TM}$, obtaining a smooth path of $H$-structures whose induced metrics are $e^{-2t}g_{\xi_0}$. Along this explicit path, which exists for every finite $t\geqslant0$ and is not a gradient-flow trajectory, the intrinsic torsion is unchanged as an $\mathfrak m$-valued one-form, while its total $L^2$-energy scales by the factor $e^{-(n-2)t}$. Thus, in dimensions $n>2$, the energy infimum is zero on every nonempty path component of the unrestricted space. Moreover, differentiating this same variation at $t=0$ shows that any unrestricted critical point must have zero torsion energy. This gives a uniform explanation, for arbitrary closed and connected subgroups $H\leqslant\SO(n)$, of why unrestricted critical points are precisely torsion-free structures.

\begin{theorem}[Scale degeneracy of the unrestricted torsion energy]
\label{thm:unrestricted_torsion_energy_degeneracy}
Let $M^n$ be compact, with $n>2$, and let
\begin{equation*}
\mathcal E(\xi)
=
\frac12\int_M |T_\xi|^2\,\mathrm{vol}_{g_\xi}
\end{equation*}
be the intrinsic torsion energy on the full space $\Gamma(\mathcal F)$ of $H$-structures, where the induced metric is allowed to vary. Then the following hold.

\begin{enumerate}
\item For every $\xi_0\in\Gamma(\mathcal F)$ and every $t\geqslant0$, there exists a smooth path $\xi_t\in\Gamma(\mathcal F)$ starting at $\xi_0$ such that
\begin{equation*}
g_{\xi_t}=e^{-2t}g_{\xi_0}
\end{equation*}
and
\begin{equation*}
\mathcal E(\xi_t)
=
e^{-(n-2)t}\mathcal E(\xi_0).
\end{equation*}

\item In particular, the infimum of $\mathcal E$ on every nonempty path component of $\Gamma(\mathcal F)$ is zero.

\item The critical points of $\mathcal E$ on the unrestricted space $\Gamma(\mathcal F)$ are precisely the torsion-free $H$-structures. Equivalently, if $\xi$ is a critical point of $\mathcal E$ on $\Gamma(\mathcal F)$, then $T_\xi=0$.

\item Consequently, if a path component of $\Gamma(\mathcal F)$ contains no torsion-free $H$-structure, then $\mathcal E$ has no critical point on that component and its infimum is not achieved.
\end{enumerate}
\end{theorem}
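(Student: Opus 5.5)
The plan is to use the homothetic bundle automorphisms $F_t:=e^{-t}\mathrm{Id}_{TM}\in\Gamma(\Aut^+(TM))$ and set $\xi_t:=F_t\cdot\xi_0$. Since $\Gamma(\mathcal F)$ is invariant under the $\GL^+(n,\mathbb R)$-action, $\xi_t$ is an $H$-structure. Its $H$-reduction is $Q_{\xi_t}=\{u\circ e^{t}\mathrm{Id}: u\in Q_{\xi_0}\}$ with the convention $\rg.u=\rg^{-1}\circ u$. Hence frames of $\xi_t$ are $e^{-t}$-rescaled frames of $\xi_0$, and therefore $g_{\xi_t}=e^{-2t}g_{\xi_0}$. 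Alternatively, this follows from the metric variation formula \cite[Lemma~1.24]{Fadel2022} with $A=-\mathrm{Id}$, exactly as in the proof of Theorem~\ref{thm: homotopy_general}.

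The key step is the scaling of the torsion. The Levi-Civita connection is invariant under constant rescaling of the metric. The intrinsic torsion $T_\xi\in\Omega^1(M,\mathfrak m_{Q})$ is the $\mathfrak m$-component of the Levi-Civita connection form restricted to $Q_\xi$. The map $u\mapsto e^{t}u$ identifies $Q_{\xi_0}$ with $Q_{\xi_t}$ $H$-equivariantly, and under this identification the connection forms coincide, since scaling by a constant commutes with parallel transport. So $T_{\xi_t}=T_{\xi_0}$ as $\mathfrak m$-valued one-forms, i.e.\ as sections of $T^*M\otimes(Q\times_H\mathfrak m)$. For the norms, the fibre metric on $\mathfrak m$ is fixed, but the $T^*M$ factor gives $|T|^2_{g_t}=e^{2t}|T|^2_{g_0}$, while $\mathrm{vol}_{g_t}=e^{-nt}\mathrm{vol}_{g_0}$. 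Hence $\mathcal E(\xi_t)=e^{-(n-2)t}\mathcal E(\xi_0)$, which proves (1).

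I expect the main obstacle to be identifying $T_{\xi_t}$ with $T_{\xi_0}$ carefully, i.e.\ checking that the normalization of $|T|^2$ in \cite{Fadel2022} is the one above. If $T$ is instead viewed as the tensor $\nabla\xi$ projected, one must track the weights of $\xi$ in $\cT^{p,q}$. Even so, the combination $|\nabla\xi|^2$ in the metric $g_\xi$ is homogeneous of degree $2$ in $e^{t}$ under the action, because $\xi_t$ and $g_t$ transform compatibly. I would verify this in an adapted frame: an $e^{-t}$-scaled orthonormal frame makes all components of $\xi_t$ equal to those of $\xi_\circ$, and the Christoffel-type components in that frame scale by $e^{t}$.

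Part (2) follows from (1): the path $t\mapsto\xi_t$ lies in the path component of $\xi_0$, and $\mathcal E(\xi_t)\to0$ as $t\to\infty$ because $n>2$. For (3), suppose $\xi$ is critical. Then
\[
0=\frac{d}{dt}\Big|_{t=0}\mathcal E(\xi_t)=-(n-2)\mathcal E(\xi),
\]
so $\mathcal E(\xi)=0$, and hence $T_\xi\equiv0$ by smoothness and compactness. Conversely, if $T_\xi=0$, then $\xi$ is a global minimum of the nonnegative functional $\mathcal E$, so $d\mathcal E_\xi=0$. This requires $\mathcal E$ to be differentiable along smooth paths, which is standard. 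For (4), a critical point on such a component would be torsion-free by (3), which is excluded. If the infimum were achieved at some $\xi$, then $\mathcal E(\xi)=0$ by (2), so $T_\xi=0$, again a contradiction.
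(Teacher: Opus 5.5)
Your argument is correct and is essentially the paper's proof. It uses the same homothety $F_t=e^{-t}\mathrm{Id}_{TM}$ and the same invariance $T_{\xi_t}=T_{\xi_0}$: the paper checks $\nabla_X\xi_t=(T_0)_X\diamond\xi_t$ directly, using that $F_t$ is parallel and commutes with all endomorphisms, which is equivalent to your identification of connection forms on $Q_{\xi_0}\cong Q_{\xi_t}$. It also has the same $e^{2t}\cdot e^{-nt}$ count and the same derivative-at-$t=0$ argument for (3) and (4).

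One slip to fix. If $Q_{\xi_t}=\{e^{t}u : u\in Q_{\xi_0}\}$ with $u:T_xM\to\mathbb R^n$, then $g_{\xi_t}(X,Y)=\langle e^{t}uX,e^{t}uY\rangle=e^{2t}g_{\xi_0}(X,Y)$, so your frame-level computation has the action convention reversed. The claim $g_{\xi_t}=e^{-2t}g_{\xi_0}$ should instead rest on \cite[Lemma~1.24]{Fadel2022} with $A=-\mathrm{Id}$, as in the paper. In any case, this sign only decides which direction of the homothety collapses the energy.
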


\begin{proof}
Let $\xi_0\in\Gamma(\mathcal F)$ be an $H$-structure inducing the Riemannian metric $g_0:=g_{\xi_0}$, and let $T_0:=T_{\xi_0}$ denote its intrinsic torsion. Consider the homothetic family of metrics
\begin{equation*}
g_t=e^{-2t}g_0.
\end{equation*}
This is the solution of
\begin{equation*}
\frac{\partial}{\partial t}g_t=-2g_t,
\end{equation*}
and corresponds, in the notation of the metric variation formula, to the symmetric endomorphism $A_t=-\mathrm{Id}$. By the explicit lifting construction used in the proof of Theorem~\ref{thm: homotopy_general}, this metric path is lifted by the family of bundle automorphisms
\begin{equation*}
F_t=e^{-t}\mathrm{Id}_{TM}.
\end{equation*}
Equivalently, the corresponding path of $H$-structures is
\begin{equation*}
\xi_t:=F_t\cdot\xi_0.
\end{equation*}
For every finite $t$, the map $F_t$ is an orientation-preserving bundle automorphism, so $\xi_t$ is a smooth $H$-structure and
\begin{equation*}
g_{\xi_t}=g_t.
\end{equation*}

Since $g_t$ differs from $g_0$ by a constant homothety, the Levi--Civita connection is unchanged. Moreover, $F_t=e^{-t}\mathrm{Id}$ is parallel and commutes with every endomorphism of $TM$. Therefore, if $T_0$ is defined by
\begin{equation*}
\nabla_X\xi_0=(T_0)_X\diamond\xi_0,
\end{equation*}
then
\begin{equation*}
\nabla_X\xi_t
=
\nabla_X(F_t\cdot\xi_0)
=
F_t\cdot\nabla_X\xi_0
=
F_t\cdot\bigl((T_0)_X\diamond\xi_0\bigr)
=
(T_0)_X\diamond(F_t\cdot\xi_0)
=
(T_0)_X\diamond\xi_t.
\end{equation*}
Thus the intrinsic torsion is unchanged as an $\mathfrak m$-valued one-form:
\begin{equation*}
T_{\xi_t}=T_{\xi_0}.
\end{equation*}

The pointwise norm, however, changes because the covector index is measured using the rescaled metric. Since $g_t=e^{-2t}g_0$, one has
\begin{equation*}
|T_{\xi_t}|^2_{g_t}
=
e^{2t}|T_{\xi_0}|^2_{g_0},
\end{equation*}
while the volume form satisfies
\begin{equation*}
\mathrm{vol}_{g_t}
=
e^{-nt}\mathrm{vol}_{g_0}.
\end{equation*}
Consequently,
\begin{equation*}
\mathcal E(\xi_t)
=
\frac12\int_M |T_{\xi_t}|^2_{g_t}\,\mathrm{vol}_{g_t}
=
e^{2t}e^{-nt}
\frac12\int_M |T_{\xi_0}|^2_{g_0}\,\mathrm{vol}_{g_0}
=
e^{-(n-2)t}\mathcal E(\xi_0).
\end{equation*}

Since $n>2$, letting $t\to+\infty$ gives
\begin{equation*}
\mathcal E(\xi_t)\longrightarrow0.
\end{equation*}
As $\mathcal E\geqslant0$, this proves that the infimum of $\mathcal E$ on the path component of $\xi_0$ is zero. Since $\xi_0$ was arbitrary, the same holds on every nonempty path component of $\Gamma(\mathcal F)$.

Now suppose that $\xi_0$ is a critical point of $\mathcal E$ on the full space $\Gamma(\mathcal F)$. The homothetic path above is an admissible variation through $H$-structures, so differentiating the identity for $\mathcal E(\xi_t)$ at $t=0$ gives
\begin{equation*}
0
=
\left.\frac{d}{dt}\right|_{t=0}\mathcal E(\xi_t)
=
-(n-2)\mathcal E(\xi_0).
\end{equation*}
Since $n>2$, it follows that $\mathcal E(\xi_0)=0$. Hence $T_{\xi_0}=0$, so $\xi_0$ is torsion-free.

Conversely, if $T_\xi=0$, then $\mathcal E(\xi)=0$, so $\xi$ is an absolute minimizer of the nonnegative functional $\mathcal E$ and therefore a critical point. Thus the unrestricted critical points are precisely the torsion-free $H$-structures.

Finally, if a path component of $\Gamma(\mathcal F)$ contains no torsion-free $H$-structure, then it contains no critical point of $\mathcal E$. Since the infimum on that component is zero but no element in the component has zero torsion energy, the infimum is not achieved.
\end{proof}

\begin{remark}
The conclusion that unrestricted critical points of the torsion energy are torsion-free had previously been observed in special geometries. In the $\mathrm G_2$ case, Weiss--Witt proved that the critical points of the Dirichlet energy on positive $3$-forms are precisely the torsion-free positive $3$-forms, using the positive homogeneity of the functional under scaling of the defining form \cite[Corollary~4.3]{WeissWitt2012}. In the $\mathrm{Spin}(7)$ case, Dwivedi similarly observed that the torsion energy is positively homogeneous under the scaling $\Phi\mapsto c^4\Phi$, and applied Euler's theorem for homogeneous functionals to conclude that its critical points are torsion-free and absolute minimizers \cite[Remark~3.6]{Dwivedi2025gradient}. 

The argument above gives a uniform proof for arbitrary closed and connected subgroups $H\leqslant\SO(n)$ in dimensions $n>2$. Moreover, it also records the stronger scale-degeneracy statement: on every nonempty path component of the unrestricted space of $H$-structures, the infimum of $\mathcal E$ is zero, and if that component contains no torsion-free structure, then the infimum is not achieved and no critical point exists there. Thus the unrestricted variational problem is degenerate because the induced metric may collapse homothetically. The fixed-isometric-class restriction removes this artificial scaling direction and is therefore essential for a meaningful variational theory with nontrivial harmonic representatives.
\end{remark}

\subsection{Cohomological classes and harmonic representatives}
\label{subsec:cohomological_classes_harmonic_representatives}

We now pass to the fixed-isometric-class setting, where the homothetic collapse direction exhibited above is no longer available. In this setting, the topology of the class becomes genuinely relevant to the variational problem. Section~\ref{subsec: finite-time_sing} recalled the finite-time singularity examples of \cite[Example~2.17 and Remark~2.18]{Fadel2022} and interpreted them, using Theorem~\ref{thm: homotopy_general} and Corollary~\ref{cor:metric_map_homotopy_equivalence}, as concentration phenomena in nontrivial isometric homotopy classes with vanishing energy infimum.

The mechanism behind those examples is specific to classes coming from top-dimensional sphere maps. More precisely, it depends on the existence of a nontrivial element in
\begin{equation*}
[S^n,\SO(n)/H],
\end{equation*}
or equivalently, up to the usual action of the fundamental group, in
\begin{equation*}
\pi_n(\SO(n)/H).
\end{equation*}
Such a class can be represented on $T^n$ by composing a map $S^n\to\SO(n)/H$ with the quotient map
\begin{equation*}
T^n\longrightarrow T^n/(T^n\setminus B)\cong S^n,
\end{equation*}
where $B\subset T^n$ is a small embedded ball. Since the harmonic energy is scale-supercritical in dimensions $n>2$, shrinking $B$ forces the energy infimum in the corresponding isometric homotopy class to be zero.

This concentration mechanism does not apply to all nontrivial isometric homotopy classes. For instance, for $\mathrm U(3)$-structures on the flat $6$-torus one has
\begin{equation*}
\SO(6)/\mathrm U(3)\cong\mathbb{CP}^3
\end{equation*}
and
\begin{equation*}
\pi_6(\mathbb{CP}^3)=0.
\end{equation*}
Thus there is no nontrivial top-dimensional sphere class in $\mathbb{CP}^3$ which can be concentrated inside small balls in the manner used in \cite[Example~2.17 and Remark~2.18]{Fadel2022}.

Nevertheless, the corresponding isometric class still has nontrivial topology. Indeed, on the flat torus the identification of fixed isometric classes with mapping spaces gives
\begin{equation*}
\pi_0(\langle g_{\mathrm{flat}}\rangle_{\mathrm U(3)})
\cong
[T^6,\mathbb{CP}^3].
\end{equation*}
These homotopy classes are detected, already at the primary level, by the cohomology class
\begin{equation*}
f^*[\omega]\in H^2(T^6;\mathbb Z),
\end{equation*}
where $[\omega]\in H^2(\mathbb{CP}^3;\mathbb Z)$ denotes the generator.

Such degree-two cohomological classes behave differently from the top-dimensional concentration classes. If $f^*[\omega]\neq0$, then there exists a closed $4$-form $\eta$ on $T^6$ such that
\begin{equation*}
\int_{T^6} f^*\omega\wedge\eta\neq0.
\end{equation*}
On the other hand, since $|f^*\omega|\leqslant C|df|^2$, one obtains
\begin{equation*}
0<
\left|
\int_{T^6} f^*\omega\wedge\eta
\right|
\leqslant
C\|\eta\|_{L^\infty}
\int_{T^6}|df|^2\,\mathrm{vol}_{g_{\mathrm{flat}}}.
\end{equation*}
Hence, the Dirichlet energy has a positive lower bound in any component with nonzero degree-two invariant. Under the fixed flat trivialization of the homogeneous bundle, this Dirichlet energy is equivalent, up to a universal constant depending only on the model, to the intrinsic torsion energy of the associated compatible $\mathrm U(3)$-structure.

Thus, fixed isometric classes exhibit a dichotomy not present in the unrestricted problem. Top-dimensional sphere classes may have zero energy infimum and lead to the finite-time singularity mechanism described in Section~\ref{subsec: finite-time_sing}. By contrast, components detected by lower-degree cohomological invariants, such as the $\mathrm U(3)$-structures on $T^6$, have positive energy lower bounds. These components are therefore naturally related to existence questions for harmonic representatives in nontrivial isometric classes.

We now give a simple construction showing that this latter situation does occur: certain nontrivial cohomological classes contain smooth harmonic representatives.

\begin{proposition}[Holomorphic construction of harmonic $\mathrm U(3)$-structures]
\label{prop:holomorphic_U3_harmonic_structures}
Let $T^6=\mathbb C^3/\Lambda$ be a complex torus equipped with a flat Kähler metric $g_{\mathrm{flat}}$. Suppose that $L\to T^6$ is a holomorphic line bundle admitting four global holomorphic sections
\begin{equation*}
s_0,s_1,s_2,s_3\in H^0(T^6,L)
\end{equation*}
with no common zero. Then these sections define a holomorphic map
\begin{equation*}
f:T^6\longrightarrow \mathbb{CP}^3,
\qquad
f=[s_0:s_1:s_2:s_3].
\end{equation*}
Under the identification
\begin{equation*}
\SO(6)/\mathrm U(3)\cong\mathbb{CP}^3,
\end{equation*}
the map $f$ determines a smooth harmonic $\mathrm U(3)$-structure compatible with $g_{\mathrm{flat}}$.

Moreover,
\begin{equation*}
f^*[\omega_{\mathrm{FS}}]=c_1(L)\in H^2(T^6;\mathbb Z),
\end{equation*}
where $[\omega_{\mathrm{FS}}]$ denotes the positive generator of $H^2(\mathbb{CP}^3;\mathbb Z)$. In particular, if $c_1(L)\neq0$, then the resulting harmonic $\mathrm U(3)$-structure lies in a nontrivial isometric homotopy class.
\end{proposition}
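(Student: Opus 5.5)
The plan has three steps: show $f$ is a well-defined holomorphic map, show that holomorphic maps between Kähler manifolds are harmonic, and transfer harmonicity to the $\mathrm U(3)$-structure through the flat trivialization. The cohomological statement then follows from the standard fact that $f^*\mathcal O(1)\cong L$.

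\emph{Well-definedness.} In a local holomorphic trivialization of $L$, the sections $s_i$ become holomorphic functions that do not vanish simultaneously. A change of trivialization multiplies all four by the same nonvanishing holomorphic function. Hence $[s_0:s_1:s_2:s_3]$ is a well-defined holomorphic map $f:T^6\to\mathbb{CP}^3$.

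\emph{Harmonicity of $f$.} I will use the classical fact (Lichnerowicz, Eells--Sampson) that a holomorphic map between Kähler manifolds is harmonic. The reason is that the tension field $\tau(f)=\tr_g\nabla df$ can be computed in a unitary frame. There the $(1,1)$-part of the second fundamental form of a holomorphic map has vanishing trace, because both Levi-Civita connections preserve the complex structures and the Kähler forms are closed. On the flat torus this is especially transparent: in flat coordinates $z_a$, $\tau(f)$ is a constant multiple of $\sum_a \nabla^{\mathbb{CP}^3}_{\partial_{\bar z_a}}\partial_{z_a}f$, and this vanishes since $\partial_{\bar z_a} f=0$ and the Fubini--Study connection is of type $(1,0)$-preserving. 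Alternatively, I would invoke the energy identity $E(f)=K(f)+\int f^*\omega_{FS}\wedge\omega^{2}/2$. With $K$ the $\bar\partial$-energy, holomorphic maps minimize energy in their homotopy class, so they are in particular critical.

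\emph{Transfer to the $\mathrm U(3)$-structure.} This is the step where the paper's framework matters, and I expect it to be the main obstacle. I use the global parallel orthonormal frame of $(T^6,g_{\mathrm{flat}})$ and the identification $\langle g_{\mathrm{flat}}\rangle_{\mathrm U(3)}\cong C^\infty(T^6,\SO(6)/\mathrm U(3))$ from the section on parallelizable manifolds. Under this identification the Levi-Civita connection is trivial, so $\nabla\xi$ corresponds to $df$. The intrinsic torsion is then the projection of $f^{-1}$ applied to the Maurer--Cartan form onto $\mathfrak m$, that is, $T=df$ viewed in $f^*T(\SO(6)/\mathrm U(3))\cong f^*[\mathfrak m]$. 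The harmonic $H$-structure equation of \cite{Fadel2022}, $\Div T=0$ (the Euler--Lagrange equation of $\mathcal E$ in a fixed isometric class), therefore becomes the harmonic map equation $\tau(f)=0$ for the normal homogeneous metric on $\SO(6)/\mathrm U(3)$.

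Two points must be checked here. First, this metric coincides, up to a constant factor, with the Fubini--Study metric on $\mathbb{CP}^3$. This holds because $\SO(6)/\mathrm U(3)$ is isotropy irreducible (Hermitian symmetric), so any invariant metric is a multiple. Second, the complex structure on $\mathbb{CP}^3$ used to define holomorphicity must be one of the two invariant ones. Harmonicity is insensitive to constant rescaling and holds for either choice, since a $\pm$-holomorphic map is harmonic.

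\emph{Cohomology.} By construction $f^*\mathcal O(1)\cong L$, since the $s_i$ are the pullbacks of the homogeneous coordinates. Hence $f^*c_1(\mathcal O(1))=c_1(L)$, and $c_1(\mathcal O(1))=[\omega_{FS}]$ is the positive generator. If $c_1(L)\neq0$, then $f^*[\omega]\neq0$, whereas constant maps pull back to zero. So $f$ is not homotopic to a constant, and the structure lies in a nontrivial component of $[T^6,\mathbb{CP}^3]\cong\pi_0(\langle g_{\mathrm{flat}}\rangle_{\mathrm U(3)})$.
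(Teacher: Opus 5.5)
Your proposal is correct and follows the same route as the paper. Holomorphic maps between Kähler manifolds are harmonic; the global parallel frame identifies compatible $\mathrm U(3)$-structures with maps $T^6\to\mathbb{CP}^3$, so the harmonic section equation becomes the harmonic map equation; and $f^*\mathcal O(1)\cong L$ gives $f^*[\omega_{\mathrm{FS}}]=c_1(L)$, hence nontriviality when $c_1(L)\neq0$. Your extra check that the normal homogeneous metric on $\SO(6)/\mathrm U(3)$ is a constant multiple of Fubini--Study, by isotropy irreducibility, fills in a point the paper asserts without comment. That check implicitly uses that the identification is equivariant for $\SO(6)\cong\SU(4)/\{\pm1\}$, and it would be worth stating this explicitly.
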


\begin{proof}
Since the sections $s_0,s_1,s_2,s_3$ have no common zero, they define a holomorphic map
\begin{equation*}
f:T^6\longrightarrow \mathbb{CP}^3,
\qquad
f=[s_0:s_1:s_2:s_3].
\end{equation*}

The domain $T^6=\mathbb C^3/\Lambda$ is Kähler with respect to the chosen flat Kähler metric, and $\mathbb{CP}^3$ is Kähler with the Fubini--Study metric. Hence every holomorphic map
\begin{equation*}
f:T^6\longrightarrow \mathbb{CP}^3
\end{equation*}
is harmonic.

On the other hand, the flat metric on $T^6$ gives a global parallel oriented orthonormal frame, and therefore
\begin{equation*}
\mathrm{Fr}(T^6,g_{\mathrm{flat}})
\cong
T^6\times\SO(6).
\end{equation*}
Thus the corresponding bundle of compatible $\mathrm U(3)$-structures is
\begin{equation*}
\mathrm{Fr}(T^6,g_{\mathrm{flat}})/\mathrm U(3)
\cong
T^6\times\SO(6)/\mathrm U(3)
\cong
T^6\times\mathbb{CP}^3.
\end{equation*}
Therefore compatible $\mathrm U(3)$-structures inducing $g_{\mathrm{flat}}$ are identified with smooth maps
\begin{equation*}
T^6\longrightarrow\mathbb{CP}^3.
\end{equation*}

Under this identification, the harmonic section equation is precisely the harmonic map equation for $f$. Hence the $\mathrm U(3)$-structure determined by $f$ is harmonic.

It remains to identify its isometric homotopy class. By the standard construction of maps to projective space from base-point-free linear systems, one has
\begin{equation*}
f^*\mathcal O_{\mathbb{CP}^3}(1)\cong L.
\end{equation*}
Taking first Chern classes gives
\begin{equation*}
f^*c_1(\mathcal O_{\mathbb{CP}^3}(1))=c_1(L).
\end{equation*}
Since $c_1(\mathcal O_{\mathbb{CP}^3}(1))$ is represented by the Fubini--Study Kähler class $[\omega_{\mathrm{FS}}]$, this gives
\begin{equation*}
f^*[\omega_{\mathrm{FS}}]=c_1(L).
\end{equation*}

If $c_1(L)\neq0$, then $f$ cannot be homotopic to a constant map. Therefore the associated $\mathrm U(3)$-structure lies in a nontrivial connected component of
\begin{equation*}
\langle g_{\mathrm{flat}}\rangle_{\mathrm U(3)}
\cong
C^\infty(T^6,\mathbb{CP}^3).
\end{equation*}
\end{proof}

The previous proposition becomes particularly effective on complex tori which are projective, namely on abelian threefolds.

\begin{corollary}
\label{cor:abelian_threefold_harmonic_U3}
Let $T^6=\mathbb C^3/\Lambda$ be an abelian threefold equipped with a flat Kähler metric, and let $L\to T^6$ be an ample holomorphic line bundle. Then, for every $k\geqslant2$, the line bundle $L^{\otimes k}$ determines a smooth harmonic $\mathrm U(3)$-structure in a nontrivial isometric homotopy class.
\end{corollary}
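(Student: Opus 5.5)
The plan is to reduce the statement to Proposition~\ref{prop:holomorphic_U3_harmonic_structures}. That proposition needs four holomorphic sections of $L^{\otimes k}$ with no common zero, together with $c_1(L^{\otimes k})\neq0$. The nontriviality part is immediate. Since $L$ is ample, $c_1(L)$ is represented by a positive $(1,1)$-form, so $c_1(L)^3[T^6]=\int_{T^6}c_1(L)^3>0$, and hence $c_1(L^{\otimes k})=k\,c_1(L)\neq0$ in $H^2(T^6;\mathbb Z)$ for every $k\geqslant1$. It then follows that $f^*[\omega_{\mathrm{FS}}]\neq0$, so $f$ is not homotopic to a constant map, and the associated $\mathrm U(3)$-structure lies in a nontrivial component of $\langle g_{\mathrm{flat}}\rangle_{\mathrm U(3)}$.

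The substantive step is producing the four sections, and the key input is Lefschetz's theorem on abelian varieties: for $L$ ample and $k\geqslant2$, the line bundle $L^{\otimes k}$ is globally generated, and for $k\geqslant3$ it is very ample. Global generation gives a base-point-free linear system $|L^{\otimes k}|$ of dimension $N=h^0(L^{\otimes k})-1$. By Riemann--Roch on an abelian threefold, $h^0(L^{\otimes k})=\chi(L^{\otimes k})=k^3c_1(L)^3/6$, which is at least $k^3\geqslant 8$ since $c_1(L)^3/6=h^0(L)\geqslant1$. So $N\geqslant3$, and the base-point-free system defines a holomorphic map $\Phi:T^6\to\mathbb{CP}^N$ with $\Phi^*\mathcal O(1)=L^{\otimes k}$.

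To reach $\mathbb{CP}^3$, I would choose four generic sections $s_0,\dots,s_3$ in the linear system. Their common zero locus is the preimage under $\Phi$ of a generic codimension-$4$ linear subspace $\Lambda\subset\mathbb{CP}^N$. Since $\dim\Phi(T^6)\leqslant3$, a generic $\Lambda$ of codimension $4$ misses $\Phi(T^6)$, so these four sections have no common zero. Equivalently, the linear projection from $\Lambda$ restricts to a morphism $\Phi(T^6)\to\mathbb{CP}^3$, and the composite is $f=[s_0:\dots:s_3]$ with $f^*\mathcal O(1)=L^{\otimes k}$. This is the standard dimension count, namely that a base-point-free system on an $m$-dimensional variety can be cut down to $m+1$ sections without common zeros.

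Applying Proposition~\ref{prop:holomorphic_U3_harmonic_structures} to these sections then gives a smooth harmonic $\mathrm U(3)$-structure with $f^*[\omega_{\mathrm{FS}}]=k\,c_1(L)\neq0$. The main obstacle is the global generation of $L^{\otimes 2}$. Everything else is dimension counting, and for this step I would cite Lefschetz's theorem from Mumford's \emph{Abelian Varieties} or Birkenhake--Lange rather than reprove it via the theorem of the square. One can note in passing that $L^{\otimes 1}$ may have base points, which is why the statement requires $k\geqslant2$.
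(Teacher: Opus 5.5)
Your proposal is correct and follows essentially the same route as the paper: global generation of $L^{\otimes k}$ for $k\geqslant2$ by Lefschetz's theorem, the Riemann--Roch count $h^0(L^{\otimes k})=k^3h^0(L)\geqslant8$, a choice of four general sections with no common zero, and then Proposition~\ref{prop:holomorphic_U3_harmonic_structures} together with $c_1(L^{\otimes k})=k\,c_1(L)\neq0$. You also spell out two steps the paper only asserts: why $c_1(L)\neq0$ (via $\int_{T^6}c_1(L)^3>0$), and the dimension count showing that a generic codimension-$4$ linear subspace misses $\Phi(T^6)$.
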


\begin{proof}
By standard facts about ample line bundles on abelian varieties, $L^{\otimes2}$ is globally generated, and therefore so is $L^{\otimes k}$ for every $k\geqslant2$. Moreover, by Riemann--Roch for abelian varieties,
\begin{equation*}
h^0(T^6,L^{\otimes k})=k^3h^0(T^6,L).
\end{equation*}
Since $L$ is ample, $h^0(T^6,L)\geqslant1$, and hence $h^0(T^6,L^{\otimes k})\geqslant8$ for every $k\geqslant2$.

Because $L^{\otimes k}$ is globally generated and $\dim_{\mathbb C}T^6=3$, four general global sections of $L^{\otimes k}$ have no common zero. Applying Proposition~\ref{prop:holomorphic_U3_harmonic_structures} to $L^{\otimes k}$ yields a harmonic $\mathrm U(3)$-structure.

Finally,
\begin{equation*}
c_1(L^{\otimes k})=k\,c_1(L)\neq0,
\end{equation*}
so the resulting harmonic structure lies in a nontrivial isometric homotopy class.
\end{proof}

Proposition \ref{prop:holomorphic_U3_harmonic_structures} and Corollary \ref{cor:abelian_threefold_harmonic_U3} provide a positive counterpart to the finite-time singularity examples. The top-dimensional sphere classes considered in \cite[Example~2.17 and Remark~2.18]{Fadel2022} have zero energy infimum and can force singularity formation under the harmonic flow. By contrast, the cohomological classes detected by $H^2(T^6;\mathbb Z)$ for $\mathrm U(3)$-structures on the flat $6$-torus have positive energy lower bounds, and many of them contain explicit smooth harmonic representatives arising from holomorphic maps to $\mathbb{CP}^3$. This suggests a broader variational theory for harmonic $H$-structures inside nontrivial isometric classes.

%%%%%%%%%%%%%%%%%%%%%%%%%%%%%%%%

\section{Postscriptum to ``Flows of geometric structures'' \cite{Fadel2022}}

In this section, we revisit several arguments from \cite{Fadel2022}, correcting certain technical points and extending some of the results beyond the assumptions originally imposed there.

We begin by observing that the path lifting mechanism established in Section~\ref{sec: topology_of_H_structures} naturally yields a general lifting principle for metric-dependent flows of $H$-structures. In particular, whenever the evolution equation of an $H$-structure is determined by a symmetric tensor depending only on the induced metric, the existence properties of the corresponding metric flow automatically lift to the associated flow of $H$-structures. The Ricci $H$-flow from \cite[Example~1.26 and Lemma~1.27]{Fadel2022} then appears as a particular instance of this general mechanism. From this perspective, the crucial point is that the lift should be constructed through the evolution operator of a non-autonomous linear ODE on bundle automorphisms, rather than by exponentiating a time-dependent family of endomorphisms.

Next, we show that the identity imposed in \cite[Equation~(2.58)]{Fadel2022} is, in fact, a consequence of the general structure equations for every closed and connected subgroup $H\leqslant\SO(n)$. Finally, we explain how the analytical part of the harmonic flow theory may be reformulated without the blanket hypothesis \cite[Condition~(2.1)]{Fadel2022}, replacing the pointwise identity between $|\nabla\xi|^2$ and $|T|^2$ by a uniform two-sided comparison and systematically working with the intrinsic torsion energy $\mathcal E$ instead of the Dirichlet energy $\mathcal D$.

\subsection{Metric-dependent flows and the Ricci $H$-flow revisited}

In \cite[Example~1.26 and Lemma~1.27]{Fadel2022}, the Ricci $H$-flow was introduced as a flow of $H$-structures whose induced metrics evolve by the Ricci flow. The purpose of this subsection is to reinterpret that construction from the perspective of the path lifting theorem established in Section~\ref{sec: topology_of_H_structures}.

More generally, the Ricci $H$-flow is only a particular instance of a broader lifting mechanism for metric-dependent flows of $H$-structures.

Suppose that the evolution equation for an $H$-structure has the form
\begin{equation}
\label{eq:general_metric_flow_H_structure}
\begin{cases}
\displaystyle
\frac{\partial}{\partial t}\xi(t)
=
S(t)\diamond \xi(t),
\\[0.3cm]
\xi(0)=\xi_0,
\end{cases}
\end{equation}
where, for each $t$, the endomorphism
\begin{equation*}
S(t)\in\Gamma(\mathrm{End}(TM))
\end{equation*}
is symmetric with respect to the induced metric $g_{\xi(t)}$, and depends only on the evolving metric. By \cite[Lemma~1.24]{Fadel2022}, the corresponding induced metric evolution equation is
\begin{equation}
\label{eq:general_metric_flow_metric}
\begin{cases}
\displaystyle
\frac{\partial}{\partial t}g(t)
=
2S(t),
\\[0.3cm]
g(0)=g_{\xi_0}.
\end{cases}
\end{equation}

The path lifting theorem from Section~\ref{sec: topology_of_H_structures} immediately yields the following general lifting principle.

\begin{proposition}[Lifting principle for metric-dependent flows]
\label{prop:general_lifting_metric_flows}
Let $\xi_0$ be an $H$-structure on $M$, and suppose that the evolution equation
\eqref{eq:general_metric_flow_H_structure} is determined by a symmetric endomorphism
\begin{equation*}
S(t)=S(g(t)),
\end{equation*}
depending only on the evolving metric.

Assume that the metric flow
\eqref{eq:general_metric_flow_metric} admits a smooth solution
\begin{equation*}
g(t),
\qquad
t\in[0,T)
\end{equation*}
with initial condition
\begin{equation*}
g(0)=g_{\xi_0}.
\end{equation*}
Then there exists a smooth solution
\begin{equation*}
\xi(t),
\qquad
t\in[0,T),
\end{equation*}
of
\eqref{eq:general_metric_flow_H_structure} satisfying
\begin{equation*}
g_{\xi(t)}=g(t)
\end{equation*}
for all $t\in[0,T)$.
\end{proposition}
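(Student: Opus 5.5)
The plan is to repeat the construction from the proof of Theorem~\ref{thm: homotopy_general}, now on the half-open interval $[0,T)$, with the given metric solution $g(t)$ as input. Since $S(t)=S(g(t))$ depends only on the metric, fixing the metric solution $g(t)$ fixes the coefficient field. So we set $S(t):=S(g(t))\in\Gamma(\mathrm{End}(TM))$, which is symmetric with respect to $g(t)$ and depends smoothly on $(t,x)\in[0,T)\times M$. We then solve, pointwise in $x$, the non-autonomous linear ODE
\begin{equation*}
\frac{\partial}{\partial t}F(t)=S(t)\circ F(t),\qquad F(0)=\mathrm{Id},
\end{equation*}
on $\End(T_xM)$. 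Linearity gives global existence on all of $[0,T)$, with no blow-up, because the coefficients are continuous on every compact subinterval $[0,T']\subset[0,T)$. Smooth dependence on parameters then gives a smooth family $F(t)$. Liouville's formula gives $\det F(t)>0$, so $F(t)\in\Gamma(\Aut^+(TM))$. Alternatively, one can apply Theorem~\ref{thm: homotopy_general} on each $[0,T']$ after reparametrising to $[0,1]$. The ODE solution is unique, so the lifts agree on overlaps and patch together to $[0,T)$.

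Next we define $\xi(t):=F(t)\cdot\xi_0$, which lies in $\Gamma(\cF)$ by $\GL^+(n,\bR)$-invariance. Differentiating gives
\begin{equation*}
\partial_t\xi(t)=S(t)\diamond\xi(t).
\end{equation*}
Here we use that $F(t)\cdot$ is the natural action, so that $\partial_t(F\cdot\xi_0)=(\dot F F^{-1})\diamond(F\cdot\xi_0)$. This is exactly where the evolution-operator construction is needed: writing $F(t)=\exp(\int_0^t S)$ would fail, because $S(t)$ and $S(s)$ need not commute. By \cite[Lemma~1.24]{Fadel2022}, the induced metrics then satisfy $\partial_t g_{\xi(t)}=2S(t)$ in the metric-lowered sense. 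Note that this $S(t)$ is the fixed, already-known field $S(g(t))$, not $S(g_{\xi(t)})$.

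It remains to identify $g_{\xi(t)}$ with $g(t)$, and this is the step I expect to need the most care. Both $g_{\xi(t)}$ and $g(t)$ satisfy the same trivial ODE $\partial_t\gamma=2S(t)^\flat$, whose right-hand side is a prescribed time-dependent tensor, and they have the same initial value $g_{\xi_0}$. Integrating in $t$ therefore gives $g_{\xi(t)}=g(t)$ for all $t\in[0,T)$. The delicate point is that the lowering of indices in Lemma~1.24 is taken with respect to the evolving metric $g_{\xi(t)}$. So it is cleaner to argue through the endomorphism form: for $g_{\xi(t)}=F(t)^{-*}g_{\xi_0}$ one has
\begin{equation*}
\partial_t g_{\xi(t)}(X,Y)=-g_{\xi(t)}(SX,Y)-g_{\xi(t)}(X,SY),
\end{equation*}
up to the sign convention of the action. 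This gives a linear ODE for $g_{\xi(t)}$. The metric $g(t)$ solves the same linear ODE, because $S(t)$ is $g(t)$-symmetric and $\partial_t g(t)=2S(t)^{\flat_{g(t)}}$. Uniqueness for linear ODEs then yields $g_{\xi(t)}=g(t)$.

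Finally, since $g_{\xi(t)}=g(t)$, we have $S(g(t))=S(g_{\xi(t)})$. Hence $\xi(t)$ genuinely solves \eqref{eq:general_metric_flow_H_structure} with the metric-dependent right-hand side, and is smooth on $[0,T)$.
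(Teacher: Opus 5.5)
Your proposal is correct and is essentially the paper's proof. It uses the same non-autonomous linear ODE $\partial_t F=S(t)\circ F$, $F(0)=\mathrm{Id}$, the lift $\xi(t)=F(t)\cdot\xi_0$, \cite[Lemma~1.24]{Fadel2022}, and a uniqueness argument identifying $g_{\xi(t)}$ with $g(t)$.

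Your treatment of that last step is a useful sharpening. You use the pointwise linear ODE $\partial_t\gamma(X,Y)=\gamma(S(t)X,Y)+\gamma(X,S(t)Y)$ with the prescribed coefficient $S(t)=S(g(t))$. This spells out what the paper compresses into ``uniqueness of the metric flow'': it is uniqueness for a linear ODE, not for the nonlinear flow $\partial_t g=2S(g)$.

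Fix the sign by the convention of \cite[Lemma~1.24]{Fadel2022}, i.e.\ the plus sign. With the minus sign you displayed, $g(t)$ would not solve that ODE.
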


\begin{proof}
By the same lifting construction as in Theorem~\ref{thm: homotopy_general}, applied on compact subintervals of $[0,T)$, the metric path $g(t)$ lifts to a smooth path of $H$-structures $\xi(t)$ such that
\begin{equation*}
g_{\xi(t)}=g(t).
\end{equation*}

More explicitly, the lift is obtained by solving the non-autonomous linear ODE
\begin{equation*}
\begin{cases}
\displaystyle
\frac{\partial}{\partial t}F(t)
=
S(t)\circ F(t),
\\[0.3cm]
F(0)=\mathrm{Id},
\end{cases}
\end{equation*}
and defining
\begin{equation*}
\xi(t):=F(t)\cdot\xi_0.
\end{equation*}

Differentiating yields
\begin{equation*}
\frac{\partial}{\partial t}\xi(t)
=
S(t)\diamond\xi(t).
\end{equation*}

Since the induced metric evolution is
\begin{equation*}
\frac{\partial}{\partial t}g_{\xi(t)}
=
2S(t),
\end{equation*}
and $g_{\xi(0)}=g_{\xi_0}$, uniqueness of the metric flow gives
\begin{equation*}
g_{\xi(t)}=g(t)
\end{equation*}
for all $t\in[0,T)$.
\end{proof}

The Ricci $H$-flow is the special case corresponding to
\begin{equation*}
S(t)=-\mathrm{Ric}(g(t)).
\end{equation*}

\begin{corollary}[Short-time existence of Ricci $H$-flow]
\label{lem: Ricci flow}
Let $(M^n,g_0)$ be a closed Riemannian manifold admitting a compatible $H$-structure $\xi_0$. Then there exists $\tau>0$ such that there is a solution $\xi(t)$, defined for all $t\in[0,\tau)$, to the problem
\begin{equation}
\label{eq: Ricci flow_H-str}
\begin{cases}
\displaystyle
\frac{\partial}{\partial t}\xi(t)
=
-\mathrm{Ric}(g_{\xi(t)})\diamond\xi(t),
\\[0.3cm]
\xi(0)=\xi_0.
\end{cases}
\end{equation}
\end{corollary}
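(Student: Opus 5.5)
The plan is to reduce the statement to Proposition~\ref{prop:general_lifting_metric_flows} applied with $S(g)=-\mathrm{Ric}(g)$, combined with the classical short-time existence theorem for the Ricci flow on closed manifolds (Hamilton, or DeTurck's trick). First, by the Hamilton--DeTurck theorem, since $M$ is closed there exist $\tau>0$ and a smooth family of metrics $g(t)$, $t\in[0,\tau)$, solving
\begin{equation*}
\frac{\partial}{\partial t}g(t)=-2\,\mathrm{Ric}(g(t)),\qquad g(0)=g_0=g_{\xi_0}.
\end{equation*}
This is exactly the metric flow \eqref{eq:general_metric_flow_metric} with $S(t)=-\mathrm{Ric}(g(t))$, where the Ricci tensor is viewed as a $g(t)$-symmetric endomorphism by raising an index with $g(t)$. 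This $S(t)$ depends only on $g(t)$, which is the hypothesis needed in Proposition~\ref{prop:general_lifting_metric_flows}.

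Next, I would invoke Proposition~\ref{prop:general_lifting_metric_flows}. Solve the pointwise non-autonomous linear ODE $\partial_tF(t)=-\mathrm{Ric}(g(t))\circ F(t)$ with $F(0)=\mathrm{Id}$, and set $\xi(t):=F(t)\cdot\xi_0$. On each compact subinterval $[0,\tau']\subset[0,\tau)$, smooth dependence on $(t,x)$ and Liouville's formula give $F(t)\in\Gamma(\Aut^+(TM))$ depending smoothly on $t$. Hence $\xi(t)$ is a smooth path of $H$-structures. Differentiation gives $\partial_t\xi=-\mathrm{Ric}(g(t))\diamond\xi(t)$. The metric variation formula \cite[Lemma~1.24]{Fadel2022} then shows that $\partial_tg_{\xi(t)}=-2\mathrm{Ric}(g(t))$ with $g_{\xi(0)}=g_0$.

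The main obstacle is that equation~\eqref{eq: Ricci flow_H-str} involves $\mathrm{Ric}(g_{\xi(t)})$, not $\mathrm{Ric}(g(t))$, so I must show $g_{\xi(t)}=g(t)$. The subtle point is the sense of "uniqueness of the metric evolution equation" used here. Once $F$ is built from the fixed family $g(t)$, the equation $\partial_t\tilde g=-2\mathrm{Ric}(g(t))$ has a prescribed, $\tilde g$-independent right-hand side. It is therefore a trivial ODE in $t$, and $\tilde g(t)=g_0-2\int_0^t\mathrm{Ric}(g(s))\,ds=g(t)$.

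To make this rigorous, I would check carefully that Lemma~1.24 expresses $\partial_tg_\xi$ in terms of the symmetric part of the endomorphism lowered with the \emph{current} metric $g_{\xi(t)}$. Then I would run a short bootstrap: the identity $g_{\xi(t)}=g(t)$ holds at $t=0$, and the difference $g_{\xi(t)}-g(t)$ satisfies a linear ODE pointwise in $x$ with zero initial data. It therefore vanishes by Gr\"onwall, and we obtain $\mathrm{Ric}(g_{\xi(t)})=\mathrm{Ric}(g(t))$, which gives a solution of \eqref{eq: Ricci flow_H-str} on $[0,\tau)$. No uniqueness of the Ricci flow itself is needed, only existence.
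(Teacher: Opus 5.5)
Your proposal is correct and follows the paper's route: Hamilton--DeTurck short-time existence for the Ricci flow, then the lifting principle of Proposition~\ref{prop:general_lifting_metric_flows} with $S=-\mathrm{Ric}$. Your explicit linear-ODE argument for $g_{\xi(t)}=g(t)$ is a welcome sharpening of the paper's brief appeal to ``uniqueness of the metric flow'': both metrics solve $\partial_t\gamma=S(t)\diamond\gamma$ with the same prescribed coefficient and the same initial value, and you are right that uniqueness of the Ricci flow itself is never needed.
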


\begin{proof}
By the short-time existence and uniqueness theorem for Ricci flow, originally due to Hamilton \cite{hamilton1982three} and later simplified by DeTurck \cite{deturck1983deforming}, there exist $\tau>0$ and a unique smooth family of Riemannian metrics $g(t)$, defined for all $t\in[0,\tau)$, satisfying
\begin{equation*}
\begin{cases}
\displaystyle
\frac{\partial}{\partial t}g(t)
=
-2\mathrm{Ric}(g(t)),
\\[0.3cm]
g(0)=g_0=g_{\xi_0}.
\end{cases}
\end{equation*}

The result therefore follows immediately from Proposition~\ref{prop:general_lifting_metric_flows}.
\end{proof}

\begin{remark}
The previous discussion should be compared with the proof of \cite[Lemma~1.27]{Fadel2022}. The crucial point is that the lift of a time-dependent metric evolution should be constructed through the evolution operator of the non-autonomous linear ODE
\begin{equation*}
\frac{\partial}{\partial t}F(t)
=
S(t)\circ F(t),
\end{equation*}
rather than by exponentiating a time-dependent family of endomorphisms. Thus, the Ricci $H$-flow becomes a particular instance of the general path lifting mechanism established in Section~\ref{sec: topology_of_H_structures}.
\end{remark}

\subsection{The identity (2.58) in the isometric case}

Let $(M^n,g)$ be a Riemannian manifold equipped with a compatible $H$-structure $\xi$, where $H\leqslant\SO(n)$ is closed and connected. Let $\xi_t$ be a smooth one-parameter family of compatible $H$-structures inducing the fixed metric $g$, and suppose that
\begin{equation*}
    \frac{\partial \xi_t}{\partial t}
    =
    C\diamond \xi_t,
\end{equation*}
where $C=C(t)\in\Omega^2_{\fm}(M)$. We shall prove that, for every vector field $\partial_m$ on $M$,
\begin{equation}
\label{eq:postscriptum-258-general}
    \pi_{\fh}
    \left(
    \frac{\partial T_m}{\partial t}
    \right)
    =
    \pi_{\fh}([T_m,C]).
\end{equation}
Taking $C=\Div T$ gives precisely \cite[Equation~(2.58)]{Fadel2022}, with $c_H=1$.

Let $I=[0,1]$ and consider the product manifold
\begin{equation*}
    \widetilde M=I\times M
\end{equation*}
with product metric $dt^2+g$. Then
\begin{equation*}
    T\widetilde M
    =
    \bR\partial_t\oplus TM.
\end{equation*}
We embed $\SO(n)$ into $\SO(n+1)$ by
\begin{equation*}
    \SO(n)\ni A
    \longmapsto
    \begin{pmatrix}
        1 & 0\\
        0 & A
    \end{pmatrix},
\end{equation*}
and define
\begin{equation*}
    \tilde H:=\{1\}\times H.
\end{equation*}
Accordingly,
\begin{equation*}
    \fso(n+1)
    =
    \fso(n)\oplus\mathfrak p,
    \qquad
    \mathfrak p\cong\bR^n.
\end{equation*}
If $\fh=\mathrm{Lie}(H)$ and $\fso(n)=\fh\oplus\fm$ is the reductive decomposition induced by $H$, then
\begin{equation*}
    \fso(n+1)
    =
    \tilde\fh\oplus\tilde\fm,
    \qquad
    \tilde\fh\cong\fh,
    \qquad
    \tilde\fm=\fm\oplus\mathfrak p.
\end{equation*}

The family $\xi_t$ defines a $\widetilde H$-structure $\widetilde\xi$ on $\widetilde M$ by adjoining the trivial direction $\mathbb R\partial_t$ to the evolving $H$-structure on the slices $\{t\}\times M$. Equivalently, at the level of model tensors, if $\xi_0$ is the model for $\xi$, then $\widetilde\xi_0$ is obtained from $\xi_0$ by adding the distinguished unit direction in $\mathbb R\oplus\mathbb R^n$. The stabiliser of $\widetilde\xi_0$ inside $\SO(n+1)$ is precisely $\widetilde H$. Indeed, for
\begin{equation*}
    \tilde h=
    \begin{pmatrix}
        1 & 0\\
        0 & h
    \end{pmatrix}
    \in\tilde H,
\end{equation*}
with $h\in H$, one has
\begin{equation*}
    \tilde h\cdot\tilde\xi_0
    =
    \tilde\xi_0,
\end{equation*}
because $h\cdot\xi_0=\xi_0$. Since the oriented orthonormal frame bundle of $(\widetilde M,dt^2+g)$ splits according to $T\widetilde M=\bR\partial_t\oplus TM$, this gives a genuine $\tilde H$-structure $\tilde\xi$ on $\widetilde M$.

Let $\tilde T$ denote the intrinsic torsion of $\tilde\xi$. We claim that
\begin{equation}
\label{eq:postscriptum-tilde-torsion}
    \tilde T_{a\partial_t+X}
    =
    aC+T_X^t.
\end{equation}
Indeed, using the product connection and the defining relation for intrinsic torsion, the $t$-component gives
\begin{equation*}
    \tilde\nabla_{\partial_t}\tilde\xi
    =
    \frac{\partial\xi_t}{\partial t}
    =
    C\diamond\xi_t
    =
    C\diamond\tilde\xi.
\end{equation*}
Similarly, for a vector field $X$ tangent to $M$,
\begin{equation*}
    \tilde\nabla_X\tilde\xi
    =
    \nabla_X\xi_t
    =
    T_X^t\diamond\xi_t
    =
    T_X^t\diamond\tilde\xi.
\end{equation*}
By linearity in the tangent vector, this proves \eqref{eq:postscriptum-tilde-torsion}. In particular,
\begin{equation*}
    \tilde T_t=C,
    \qquad
    \tilde T_m=T_m^t.
\end{equation*}

We now apply \cite[Equation~(1.56)]{Fadel2022} to the $\tilde H$-structure $\tilde\xi$ on $\widetilde M$, taking one index in the $\partial_t$-direction and one index $\partial_m$ tangent to $M$. Since the product metric has no mixed curvature terms, this gives
\begin{equation*}
    \pi_{\tilde\fh}
    \left(
    \tilde\nabla_t\tilde T_m
    -
    \tilde\nabla_m\tilde T_t
    \right)
    =
    -2
    \pi_{\tilde\fh}
    \left(
    [\tilde T_t,\tilde T_m]
    \right).
\end{equation*}
Using $\tilde T_t=C$, $\tilde T_m=T_m^t$, and $\tilde\fh=\fh$, we obtain
\begin{equation}
\label{eq:postscriptum-pre-258}
    \pi_{\fh}
    \left(
    \frac{\partial T_m}{\partial t}
    -
    \nabla_mC
    \right)
    =
    -2
    \pi_{\fh}
    \left(
    [C,T_m]
    \right).
\end{equation}

It remains to identify the $\fh$-part of $\nabla_mC$. Let $\nabla^H$ denote the canonical $H$-connection determined by the $H$-structure. With the convention of \cite{Fadel2022},
\begin{equation*}
    \nabla_m^H
    =
    \nabla_m+T_m.
\end{equation*}
Therefore, for an $\fso(TM)$-valued section $C$,
\begin{equation*}
    \nabla_m C
    =
    \nabla_m^H C-[T_m,C].
\end{equation*}
Since $C\in\Omega^2_{\fm}(M)$ and $\nabla^H$ preserves the splitting $\fso(TM)=\fh\oplus\fm$, we have
\begin{equation*}
    \nabla_m^H C\in\fm.
\end{equation*}
Hence
\begin{equation*}
    \pi_{\fh}(\nabla_mC)
    =
    -\pi_{\fh}([T_m,C]).
\end{equation*}
Substituting this into \eqref{eq:postscriptum-pre-258}, we get
\begin{align*}
    \pi_{\fh}
    \left(
    \frac{\partial T_m}{\partial t}
    \right)
    &=
    -2\pi_{\fh}([C,T_m])
    +
    \pi_{\fh}(\nabla_mC)
    \\
    &=
    -2\pi_{\fh}([C,T_m])
    -
    \pi_{\fh}([T_m,C])
    \\
    &=
    \pi_{\fh}([T_m,C]),
\end{align*}
because $[C,T_m]=-[T_m,C]$. This proves \eqref{eq:postscriptum-258-general}, and therefore \cite[Equation~(2.58)]{Fadel2022} holds for arbitrary closed and connected $H\leqslant\SO(n)$.

\subsection{Extension of the arguments in Part~2 of \cite{Fadel2022} to arbitrary groups}

Part~2 of \cite{Fadel2022} was written under two simplifying assumptions. The first is the blanket hypothesis \cite[Condition~(2.1)]{Fadel2022}, used essentially to identify $|\nabla\xi|^2$ and $|T|^2$ up to a fixed multiplicative constant. The second is \cite[Equation~(2.58)]{Fadel2022}, used from Proposition~2.24 onwards to control the $\fh$-component of the evolution of the torsion.

The previous subsection removes the second assumption: Equation~(2.58) holds for every closed and connected subgroup $H\leqslant\SO(n)$. We now explain how the analytical arguments may be reorganised so that Condition~(2.1) is no longer needed.

The key replacement for Condition~(2.1) is the following elementary norm comparison. For any compatible $H$-structure, the intrinsic torsion satisfies
\begin{equation*}
    \nabla_X\xi
    =
    T_X\diamond\xi.
\end{equation*}
Pointwise, this is induced by the linear map
\begin{equation*}
    \fm\longrightarrow V,
    \qquad
    A\longmapsto A\diamond\xi_0.
\end{equation*}
Since $H$ is the stabiliser of the model tensor $\xi_0$, this map has trivial kernel on $\fm$. Being a linear injective map between finite-dimensional normed spaces, it gives constants $0<\tilde c\leqslant c$, depending only on the model $H$-structure, such that
\begin{equation}
\label{eq:postscriptum-nabla-torsion-comparison}
    \tilde c |T|^2
    \leqslant
    |\nabla\xi|^2
    \leqslant
    c|T|^2.
\end{equation}
Consequently, for
\begin{equation*}
    \mathcal E(\xi)
    =
    \frac12\int_M |T|^2\,\mathrm{vol}_g
\end{equation*}
and
\begin{equation*}
    \mathcal D(\xi)
    =
    \frac12\int_M |\nabla\xi|^2\,\mathrm{vol}_g,
\end{equation*}
one has
\begin{equation}
\label{eq:postscriptum-energy-comparison}
    \tilde c\,\mathcal E(\xi)
    \leqslant
    \mathcal D(\xi)
    \leqslant
    c\,\mathcal E(\xi).
\end{equation}

The second point is that $\mathcal E$ is the functional naturally adapted to arbitrary $H$. Indeed, by \cite[Corollary~1.46]{Fadel2022}, if $\frac{\partial\xi_t}{\partial t}=C\diamond\xi_t$ is an isometric variation, then
\begin{equation*}
    \left.
    \frac{d}{dt}
    \right|_{t=0}
    \mathcal E(\xi_t)
    =
    -
    \int_M
    \langle \Div T,C\rangle\,\mathrm{vol}_g.
\end{equation*}
Thus the harmonic flow
\begin{equation*}
    \frac{\partial\xi}{\partial t}
    =
    \Div T\diamond\xi
\end{equation*}
is the negative gradient flow of $\mathcal E$ on the isometric class, and $\mathcal E$ is non-increasing along the flow. By contrast, the functional $\mathcal D$ is the corresponding gradient functional only under the additional hypothesis \cite[Condition~(2.1)]{Fadel2022}; without that hypothesis, $\mathcal D$ need not be monotone along the harmonic flow.

With these two replacements, the main arguments of Part~2 of \cite{Fadel2022} extend as follows.

First, the short-time existence theorem \cite[Proposition~2.1]{Fadel2022} is unaffected, since it is a general result for the harmonic section flow. Likewise, the parabolic rescaling argument \cite[Lemma~2.4]{Fadel2022} only uses the harmonic flow equation and is independent of Condition~(2.1).

The Bochner-type and Shi-type estimates \cite[Lemma~2.2 and Proposition~2.3]{Fadel2022} are already available for $|T|^2$. The comparison \eqref{eq:postscriptum-nabla-torsion-comparison} then transfers the resulting estimates to $|\nabla\xi|^2$, with constants modified only by the model $H$-structure. In particular, the estimates may be used interchangeably for the two densities whenever only upper bounds and compactness consequences are needed.

The almost-monotonicity formula \cite[Theorem~2.5]{Fadel2022} and its integrated version \cite[Theorem~2.7]{Fadel2022} are naturally expressed in terms of $\mathcal E$ and $|T|^2$. Therefore their validity does not depend on Condition~(2.1). The $\epsilon$-regularity theorem \cite[Theorem~2.10]{Fadel2022} follows from the same almost-monotonicity argument. When the statement is formulated with $e(\xi)=|\nabla\xi|^2$, one obtains it from the corresponding $|T|^2$ estimate by using \eqref{eq:postscriptum-nabla-torsion-comparison}.

The energy gap result \cite[Proposition~2.11]{Fadel2022} may still be stated in its original form using $\mathcal D(\xi)$ and $|\nabla\xi|^2$. Indeed, the proof can be run with the $|T|^2$ versions of the Shi-type estimates and $\epsilon$-regularity, and the conclusion is then translated back to $\mathcal D$ and $|\nabla\xi|^2$ using \eqref{eq:postscriptum-nabla-torsion-comparison} and \eqref{eq:postscriptum-energy-comparison}.

In Section~2.5 of \cite{Fadel2022}, Lemma~2.20 works equally for $|T|^2$ and $|\nabla\xi|^2$. For Lemma~2.22, one should replace the quantity used there by
\begin{equation*}
    \bar e(t)
    =
    \max_M |T(\cdot,t)|^2.
\end{equation*}
The Shi-type estimates for $|T|^2$ give uniform bounds for $|\nabla^mT|^2$. Then, using the formulae relating $\nabla\xi$ and $T$ in the proof of \cite[Proposition~2.3]{Fadel2022}, one obtains uniform bounds for the derivatives of $\xi$. This gives smooth subsequential convergence of $\xi(t_j)$ along suitable sequences $t_j\to\infty$. The second part of the proof of Lemma~2.22 carries over with $\mathcal E(\xi)$ replacing $\mathcal D(\xi)$, using the monotonicity of $\mathcal E$ along the flow.

Since $|T|^2$ and $|\nabla\xi|^2$ have the same parabolic scaling, the hypotheses of \cite[Lemma~2.23]{Fadel2022} can be reformulated in terms of $|T|^2$ and $\mathcal E(\xi)$. The proofs of \cite[Theorems~2.13 and 2.16]{Fadel2022} then go through with $\mathcal E$ in place of $\mathcal D$, after changing the smallness constants according to \eqref{eq:postscriptum-energy-comparison}. Thus the long-time existence and subconvergence theorem, as well as the finite-time singularity criterion in a homotopy class containing no torsion-free structure, remain valid in this broader setting.

It remains to consider the uniqueness of the long-time limit, treated in Section~2.6 of \cite{Fadel2022}. There, the additional hypothesis was precisely Equation~(2.58). Since this identity is now known to hold for arbitrary closed and connected $H\leqslant\SO(n)$ by \eqref{eq:postscriptum-258-general}, the proof of \cite[Proposition~2.24]{Fadel2022} applies in general, after replacing $\mathcal D$ by $\mathcal E$ where monotonicity is used. In particular, the convergence estimate remains valid, and the harmonic flow has a unique long-time limit under the corresponding smallness assumptions.

The same replacement applies to \cite[Theorem~2.27]{Fadel2022}. Its proof uses the monotonicity of the energy, the small-torsion estimates, and the uniqueness mechanism from Proposition~2.24. Working with $\mathcal E$ and $|T|^2$, and then translating the conclusion via \eqref{eq:postscriptum-nabla-torsion-comparison}, gives the same result for arbitrary closed and connected $H\leqslant\SO(n)$.

Finally, the proof of \cite[Theorem~2.29]{Fadel2022} also extends. Part~(i) is reproduced using $|T|^2$ and the comparison estimate \eqref{eq:postscriptum-nabla-torsion-comparison} in place of Condition~(2.1). For part~(ii), one still measures closeness in terms of $\nabla\xi$, but the estimates are obtained from $\mathcal E(\xi)$, which decreases along the flow, together with \eqref{eq:postscriptum-nabla-torsion-comparison}. Equivalently, the monotone quantity is now the $L^2$-norm of the intrinsic torsion, and the corresponding control of $|\nabla\xi_t|^2$ follows from the uniform comparison. This yields the required $C^0$ bounds for $|\nabla\xi_t|^2$ and allows the argument of \cite{Fadel2022} to conclude as before.

Thus the role of Condition~(2.1) in Part~2 of \cite{Fadel2022} is mainly to identify $\mathcal D$ and $\mathcal E$ exactly. For arbitrary closed and connected $H\leqslant\SO(n)$, this identity is replaced by the uniform equivalence \eqref{eq:postscriptum-energy-comparison}, while the monotone functional along the harmonic flow is $\mathcal E$. Together with the general validity of Equation~(2.58), this extends the analytical results of Part~2 to arbitrary closed and connected subgroups of $\SO(n)$.

\bmhead{Acknowledgements}
The first author would like to thank Jakob Stein for discussions concerning the connectedness of the space of $H$-structures, which motivated the results of Section~\ref{sec: topology_of_H_structures}. 

Daniel Fadel was partially supported by the University of S\~ao Paulo New Faculty Support Program. The authors also acknowledge support from the MATH-AmSud project ``Symmetries in Geometry and Physics'' (SGP 24-MATH-12) and from the Brazil--France PRCI ANR-FAPESP project ``BRIDGES -- Brazil-France interplays in Gauge Theory, extremal structures and stability'' (ANR-21-CE40-0017).

\bibliography{sn-bibliography}

\end{document}